\def\Q{{\mathbb Q}}
\def\Z{{\mathbb Z}}
\def\fq{{\mathbb F}}
\def\Com{{\mathbb C}}
\def\p{{\mathfrak p}}
\def\Gg{{\mathfrak g}}
\def\sl{{\mathfrak s} {\mathfrak l}}
\def\gl{{\mathfrak g} {\mathfrak l}}
\def\LL{{\mathfrak L}}
\def\GG{{\mathcal G}}
\def\O{{\mathcal O}}
\def\U{{\mathcal U}}
\def\E{{\mathcal E}}
\def\ff{{\mathcal F}}
\def\Ll{{\mathcal L}}
\def\T{{\mathcal T}}
\def\Reg{{\rm Reg}}
\def\Ind{{\rm Ind}}
\def\1{{\bf 1}}
\newcommand{\isomto}{\overset{\sim}{\rightarrow}}
\newtheorem{Theorem}{Theorem}
\newtheorem*{Remark}{Remark}
\newtheorem*{Example}{Example}
\author{Christian Maire}
 \address{FEMTO-ST Institute, Universit\'e Bourgogne Franche-Comt\'e, CNRS,  15B avenue des Montboucons, 25000 Besan\c con, FRANCE} 
\email{christian.maire@univ-fcomte.fr}
\begin{document}

\date{\today}

\title{On Galois representations  with large image}

\begin{abstract} For  every   prime number $p\geq 3$ and every integer $m\geq 1$, we prove the existence of a continuous Galois representation~$\rho: G_\Q \rightarrow Gl_m(\Z_p)$  which has open image and  is unramified outside $\{p,\infty\}$ (resp. outside $\{2,p,\infty\}$)  when $p\equiv 3$ mod $4$ (resp.  $p \equiv 1$ mod $4$). 
\end{abstract}

\thanks{The author thanks Anwesh Ray for many useful and inspiring discussions,  Ravi Ramakrishna for  interesting exchanges,  Farshid Hajir for his strong encouragement and comments, and C\'ecile Armana for useful remarks. This work  was partially supported by the ANR project FLAIR (ANR-17-CE40-0012) and  by the EIPHI Graduate School (ANR-17-EURE-0002).  }

\subjclass{11F80, 11R37, 11R32}

\keywords{Galois representations, uniform groups, pro-$p$ extensions unramified outside $p$.}


\maketitle


Let $K$ be a number field having $r_2$ non-real embeddings, let $p$ be a prime number and let~$G$ be a finitely generated pro-$p$ group of $p$-rank at most  $r_2+1$. When the field~$K$ is $p$-rational
(see \S \ref{section_restricted_ramification} for the full definition and background), the Galois group of the maximal  $p$-extension of $K$ unramified outside $p$ is a free pro-$p$ group of rank $r_2+1$. Hence 
the group  $G$  can be realized as the  Galois group of an extension over~$K$ unramified outside $p$, thanks to the universal property of free groups. 
In the context of Galois representations, Greenberg in \cite{Greenberg} developed this approach to realize continuous Galois representations $\rho:G_\Q\rightarrow Gl_m(\Z_p)$ of the absolute Galois $G_\Q$ of  $\Q$, with open image and such that $\rho$ is  unramified outside $\{p,\infty\}$, under the hypotheses that $p$ is a regular prime and $m$ satisfies $ 1+ 4[m/2] \leq p$. 
The regularity of $p$ is important because for the cyclotomic field $K=\Q(\zeta_p)$, it is equivalent to the $p$-rationality of $K$.

\medskip

A few years later this method was  extended by  Cornut and J. Ray \cite{Cornut-Ray} for more general linear groups, but always under the assumption that $p$ is regular and that all large $m$ are excluded when $p$ is fixed.

\medskip

In fact, it is possible to relax the condition on  $p$-rationality to realize Galois representations with big image:
this has been recently done by A. Ray  in \cite{Ray}. For example, when $p\geq 2^{m+2+2e_p}$, where $e_p$ is the index of irregularity of~$p$, A. Ray shows the existence of continuous Galois representations $\rho: G_\Q \rightarrow Gl_m(\Z_p)$ unramified outside $\{p,\infty\}$ with open image. But as in \cite{Greenberg} and \cite{Cornut-Ray}, the dimension of the representations is bounded for fixed~$p$. 

\medskip

By a different approach, Katz in \cite{Katz} constructs geometric Galois representations over cyclotomic extensions, and by descent he gets finitely ramified continuous Galois representations of  $G_\Q$  with open image in $Gl_m(\Z_p)$,   for $p\equiv 1 $ mod $3$ or $p \equiv 1$ mod~$4$ for every even $m\geq 6$. In particular, for such primes $p$, the result of Katz shows the existence of Galois representations with open image for large~$m$. We note that the representations constructed by Katz are motivic but are ramified at sets consisting of primes of potentially many different residue characteristics, whereas the earlier approach yields representations 
unramified outside $\{p,\infty\}$ which are, by contrast, what Katz calls ``spectacularly non-motivic''.

\medskip

In this work, by extending the arithmetical approaches of \cite{Greenberg}, \cite{Cornut-Ray} and \cite{Ray}, we are able to prove (Corollary \ref{coro_maintheorem}):

\begin{Theorem}\label{TheoremA} Given a prime number $p\geq 3$, and an integer $m\geq 1$, there exist continuous Galois representations $\rho: G_\Q \rightarrow Gl_m(\Z_p)$ with open image satisfying:
   \begin{itemize}
       \item[$(i)$] $\rho$ is unramified ouside $\{p, \infty\}$ if $p\equiv -1 \ {\rm mod} \ 4$,
       \item[$(ii)$] $\rho$ is unramified ouside $\{2,p, \infty\}$ if $p\equiv 1 \ {\rm mod} \ 4$.
   \end{itemize}
\end{Theorem}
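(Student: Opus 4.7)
I would extend the arithmetic framework of Greenberg, Cornut--Ray, and Ray by replacing the fixed cyclotomic field $\Q(\zeta_p)$ by an auxiliary number field $K/\Q$ whose degree is allowed to grow with $m$. The aim is to produce, for each pair $(p,m)$, a totally imaginary abelian extension $K/\Q$, unramified outside the allowed set $S$, such that the pro-$p$ Galois group $G_K:=\mathrm{Gal}(K_S/K)$ of the maximal pro-$p$ $S$-ramified extension of $K$ admits a $\mathrm{Gal}(K/\Q)$-equivariant continuous surjection onto a uniform open pro-$p$ subgroup $U$ of $Gl_m(\Z_p)$. Combining with the short exact sequence $1\to G_K\to G_{\Q,S}\to \Delta\to 1$ then yields the desired $\rho:G_\Q\to Gl_m(\Z_p)$ with open image.

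\textbf{Choice of $K$.} I would take $K_0=\Q$ when $p\equiv 3\bmod 4$, and $K_0=\Q(i)$ when $p\equiv 1\bmod 4$, and set $K=K_n:=K_0(\zeta_{p^n})$ for $n$ sufficiently large (depending on $m$). These fields are abelian over $\Q$, totally imaginary, and unramified outside the prescribed $S$. Crucially, $r_2(K_n)=[K_n:\Q]/2$ tends to $\infty$ with $n$, so $K$ can be made to have as many complex places as desired. The $p$-rationality of $K_n$ is not assumed and is in general unavailable.

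\textbf{Arithmetic input.} Even without $p$-rationality, the minimum number of generators of $G_{K_n}$ is at least $r_2(K_n)+1$, while Iwasawa theory controls the number of relations uniformly in $n$: for the cyclotomic tower $\{K_n\}$, the relation count grows at worst linearly in $n$ (governed by the classical $\mu$- and $\lambda$-invariants of the $p$-class-group tower), whereas the generator count grows exponentially. Consequently, for $n$ large enough relative to $m$, $G_{K_n}$ admits surjections onto uniform pro-$p$ quotients of dimension $m^2$, matching that of an open subgroup of $Gl_m(\Z_p)$.

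\textbf{Equivariant descent and main obstacle.} To descend to $G_\Q$, the surjection $G_{K_n}\twoheadrightarrow U$ must be $\Delta_n=\mathrm{Gal}(K_n/\Q)$-equivariant with respect to a prescribed $\Delta_n$-action on $U$, obtained from a fixed embedding of $\Delta_n\hookrightarrow Gl_m(\Z_p)$ via Teichm\"uller-type characters, following Greenberg. Existence of such an equivariant surjection reduces to an isotypic dimension count inside $G_{K_n}^{ab}\otimes_{\Z_p}\Q_p$: each required character isotype appears in the generator module with multiplicity growing with $n$, and one needs the relation module not to consume these isotypes. The main obstacle is precisely this isotypic bookkeeping in the absence of $p$-rationality, where the relation module carries potentially nontrivial $\Delta_n$-structure; the crucial input is the Iwasawa-theoretic description of the relations in the cyclotomic tower, together with the freedom to enlarge $n$. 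Once the equivariant surjection is secured, extending to a representation $\rho:G_{\Q,S}\to U\rtimes\Delta_n\hookrightarrow Gl_m(\Z_p)$ with open image is a routine composition.
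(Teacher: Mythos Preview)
Your proposal has a decisive gap. You take $K_n=K_0(\zeta_{p^n})$ with $n$ large, so $\Delta_n=\mathrm{Gal}(K_n/\Q)$ has order $(p-1)p^{n-1}$ (times $2$ when $K_0=\Q(i)$). For $n\ge 2$ this order is divisible by $p$, and every piece of the descent machinery you invoke collapses: the sequence $1\to G_{K_n}\to G_{\Q,S}\to\Delta_n\to 1$ need not split (no Schur--Zassenhaus), the group ring $\fq_p[\Delta_n]$ is not semisimple so your ``isotypic dimension count'' in $G_{K_n}^{ab}\otimes\Q_p$ is not well-posed, and the cohomological lifting criteria in the Greenberg/Cornut--Ray/Ray framework all require $p\nmid|\Delta|$. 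If you restrict to $n=1$ to avoid this, you are back to Greenberg's setup over $\Q(\zeta_p)$ (or a quadratic twist thereof), which as you note only treats $m$ bounded in terms of $p$. Thus the very step that was supposed to supply extra generators for large $m$ is exactly where the argument breaks.

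The paper moves in the opposite direction: rather than enlarging $K$, it takes $K$ as small as possible, namely the imaginary quadratic field $K=\Q(\sqrt{-p})$. Then $|\Delta|=2$ is coprime to $p$, an explicit Brauer--Siegel bound gives $p\nmid |Cl_K|$, and hence $G_{K,p}$ is free pro-$p$ on exactly $r_2+1=2$ generators, with $\T_{K,p}=0$. The reason two generators suffice for \emph{every} $m$ is a Lie-algebra input you do not use: by Kuranishi (made explicit for $\sl_m$ by Detinko--De~Graaf and Chistopolskaya), $\sl_m(\Q_p)$ is generated by two elements, yielding a $2$-generated closed subgroup $G'\subset Sl_m^1$ which is locally the same as $Sl_m(\Z_p)$. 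The involution $A=\sum_i(-1)^{i+1}E_{i,i}$ acts by $\pm 1$ on these two generators, matching the $\Delta$-decomposition $G_{K,p}^{p,\mathrm{el}}\simeq \1\oplus\varphi$; since $\T_{K,p}=0$ the lifting obstruction vanishes identically, and a twist by the cyclotomic character upgrades from $Sl_m$ to open image in $Gl_m$. The dichotomy $p\equiv\pm 1\bmod 4$ is simply the ramification of $\Q(\sqrt{-p})/\Q$.
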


\medskip

 \begin{Remark}For $m=1$ the existence of such representations is a consequence of class field theory. \end{Remark}

Our criteria coincide with those  of  Greenberg when the  number field $K$ fixed  by the residual representation is $p$-rational. 
However our approach also works in greater generality. In particular by passing through the number field  $K=\Q(\zeta_p)$, the criteria we give are specially adapted to produce,  for many primes  $p\equiv 1$ mod~$4$  and large $m$, continuous Galois representations $\rho: G_\Q \rightarrow Gl_m(\Z_p)$ ramified only at $\{p,\infty\}$ with open image. 
In fact, this is the case for all but six primes $p\equiv 1$ mod~$4$ less than $ 4\cdot 10^5$.
The main technical result we obtain can be viewed as a refinement of Theorem \ref{TheoremA}, $(ii)$.
To explain it, let $v_2$ be the $2$-adic valuation, and let $\omega$ be the mod $p$ reduction of the cyclotomic character. We prove (Theorem \ref{theo_zeta_p}):

\begin{Theorem} Let $p \equiv 1 \ {\rm mod \ } 4$ be a prime number, and let $m\geq 3$. 
Write $p-1=2^\lambda a$ where $2\nmid a$, so $\lambda=v_2(p-1)$. Let $\{\omega^{k_1},\cdots, \omega^{k_e}\}$ be the characters corresponding to the nontrivial components of  the $p$-Sylow of the  class group of $\Q(\zeta_p)$.
Suppose that:
\begin{itemize}
 \item[$(i)$] $v_2(m-1) \geq \lambda$ if $m$ is odd and $v_2(m-2)\geq \lambda$ if $m$ is even;
  \item[$(ii)$] $a\nmid (k_i-1)$ for $i=1,\cdots, e$.
\end{itemize} Then there exist continuous Galois representations $\rho: G_\Q \rightarrow Gl_m(\Z_p)$  unramified outside $\{p,\infty\}$, and  with open image.
\end{Theorem}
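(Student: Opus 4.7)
The strategy is to pass through $K = \Q(\zeta_p)$: over $K$ one has much more room, since the $p$-rank of $\GG_S^{ab}$, where $\GG_S = \mathrm{Gal}(K_S/K)$ denotes the Galois group of the maximal pro-$p$ extension of $K$ unramified outside $p$, equals $r_2+1 = (p+1)/2$. Since $|\Delta|$ is coprime to $p$ for $\Delta = \mathrm{Gal}(K/\Q) = \Delta_2 \times \Delta_a$ with $|\Delta_2| = 2^\lambda$ and $|\Delta_a| = a$, one has a clean semidirect decomposition $\mathrm{Gal}(K_S/\Q) \cong \GG_S \rtimes \Delta$, and every $\Z_p$-module attached to $\GG_S$ decomposes into $\Delta$-isotypic components. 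The goal is to build a representation $\rho \colon G_\Q \to Gl_m(\Z_p)$ factoring through such a semidirect product and exhibit an open image.

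\emph{First step: a $\Delta$-stable target subgroup of $Gl_m(\Z_p)$.} I would first produce a diagonal element $D \in Gl_m(\Z_p)$ of order $2^\lambda$ realizing a generator of $\Delta_2$. Hypothesis $(i)$ controls the allowed eigenvalue multiplicities: when $m \equiv 1 \pmod{2^\lambda}$ one can take $D$ with eigenvalue $1$ once and every primitive $2^\lambda$-th root of unity with equal multiplicity $(m-1)/2^\lambda$, and analogously for $m \equiv 2 \pmod{2^\lambda}$. The centralizer of $D$ then contains a large $\Delta_2$-stable uniform pro-$p$ subgroup $U \subseteq Gl_m(\Z_p)$ whose Lie algebra carries all the needed $\Delta_2$-isotypic components; extending $\Delta_a$ to act on $V = \Z_p^m$ by a suitable scalar character (and trivially on $U$ by conjugation) realizes $U \rtimes \Delta \hookrightarrow Gl_m(\Z_p)$.

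\emph{Second step: equivariant surjection $\GG_S \twoheadrightarrow U$.} This is the heart of the proof and refines the Greenberg--Cornut--Ray strategy. On $\GG_S^{ab}$ the $\Delta$-isotypic structure is well understood: the pro-$p$ free part of rank $(p+1)/2$ is essentially a twist of $\Z_p(1)$, while the Koch-type relations of $\GG_S$ (measuring the failure of $p$-rationality of $K$) sit in the isotypic components indexed by $\omega^{k_1}, \ldots, \omega^{k_e}$ coming from the $p$-class group. Hypothesis $(ii)$, $a \nmid k_i - 1$, is precisely the statement that $\omega^{k_i}|_{\Delta_a} \neq \omega|_{\Delta_a}$ for every $i$, so the relation-carrying components are orthogonal to the $\omega|_{\Delta_a}$-component in which the generators of the target uniform quotient must live. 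Consequently, the obstruction to lifting the $\fq_p$-surjection $\GG_S^{ab}/p \twoheadrightarrow U/[U,U]U^p$ to a genuine pro-$p$ surjection vanishes when restricted to the relevant $\Delta$-isotypic component.

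\emph{Conclusion and main obstacle.} The composition
\[
G_\Q \twoheadrightarrow \GG_S \rtimes \Delta \twoheadrightarrow U \rtimes \Delta \hookrightarrow Gl_m(\Z_p)
\]
yields a continuous representation with open image (the image contains $U$), unramified outside $\{p, \infty\}$. The first step is essentially linear-algebraic and is where condition $(i)$ enters, via the eigenvalue count for $D$. The main obstacle lies in the second step: one must keep careful track of the $\Delta$-action on a minimal Koch-type presentation of $\GG_S$ and verify that condition $(ii)$ confines all relations to $\Delta$-isotypic components disjoint from those where the generators of $U$ live, so that the equivariant lifting problem is truly unobstructed in the isotypic component of interest. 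This is where the genuine arithmetic-cohomological content of the theorem resides.
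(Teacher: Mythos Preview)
Your overall architecture---pass through $K=\Q(\zeta_p)$, exploit the $\Delta$-isotypic decomposition of $G_{K,p}^{ab}$, and use condition $(ii)$ to push the Koch-type obstructions away from the components where the generators live---is the right one and matches the paper. But the first step, the construction of the target, has a genuine gap that makes the argument fail as written.

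You place the uniform group $U$ \emph{inside the centralizer} of $D$. If $U$ commutes with $D$ then $\Delta_2$ acts trivially on $U$ by conjugation, so $U$ is block-diagonal and its Lie algebra sits in a proper Levi of $\gl_m$; such a $U$ is never open in $Gl_m(\Z_p)$, and the final representation cannot have open image. What the paper does instead is the opposite: one takes the diagonal matrix $A_a(s)=\sum_i \omega^{ia}(s)E_{i,i}$ (so $\rho_0$ factors through $\Delta/\Delta_a\simeq\Delta_2$) and lets it act \emph{nontrivially} by conjugation on the whole of $Sl_m^1$. The key input you are missing is the explicit two-element generation of $\sl_m(\Q_p)$ due to Kuranishi and made concrete by Detinko--De~Graaf and Chistopolskaya: the two generators $z_1=\sum_i E_{i,i+1}(p)$ and $z_2=E_{m,1}(p)$ (resp.\ $E_{m-1,1}(p)+E_{m,2}(p)$) are eigenvectors for $A_a$, with characters $\omega^{-a}$ and $\omega^{a(m-1)}$ (resp.\ $\omega^{a(m-2)}$). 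One sets $G'=\langle \exp z_1,\exp z_2\rangle$; this $2$-generated group is \emph{locally the same} as $Sl_m^1$, and that is what eventually gives open image after a final twist by the cyclotomic character.

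This also explains where condition $(i)$ really enters, and it is not the eigenvalue-multiplicity count you propose. The even part of $\chi(\ff_{K,p}/p)$ consists of the single character $\1$; since the $\Delta$-action on $z_1$ is via the odd character $\omega^{-a}$ (always available) and on $z_2$ via $\omega^{a(m-1)}$ or $\omega^{a(m-2)}$, one needs the latter to be \emph{trivial} to find a matching $h_2$ in $\ff_{K,p}$. That forces $2^\lambda\mid(m-1)$ (resp.\ $2^\lambda\mid(m-2)$), which is exactly $(i)$. Condition $(ii)$ then reads, via reflection, as $\omega^{1-k_i}$ (the characters of $\T_{K,p}[p]$) never equalling any $\omega^{(i-j)a}$ appearing in $\gl_{m,p}$, i.e.\ $a\nmid(k_i-1)$; this is the orthogonality $\T_{K,p}[p]\perp\Gg_p$ that makes the successive embedding problems unobstructed.
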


\medskip

\begin{Example} 
For $ p\leq 4\cdot 10^5$, there are only six cases for which $(ii)$ fails, and the index of irregularity $e$ is 1 for all of them:
$$\begin{array}{l|cccccc}
 p & 257&3329 &11777 &114689&163841&184577\\
 \hline
 k_1 &93& 1951&8879&34343&140801&49029
\end{array}
\cdot $$
\end{Example}

\medskip

Here is a sketch of our approach. We first revisit  the question of  the lifting of residual Galois  representations (of order coprime to $p$) in terms of embedding problems, by using the criteria of Hoechsmann (see for example \cite[Chapter III, \S 5]{NSW}). The result we obtain involves the adjoint representation of a uniform group~$G$ (Theorem \ref{theo_main}).
We then exploit a result of Kuranishi \cite{Kuranishi}  that shows that a semisimple Lie algebra  can be generated by~$2$ elements; in particular we use the explicit form  for  $\sl_m$ recently given by Detinko-De Graaf \cite{Detinko-DeGraaf}, and Chistopolskaya \cite{chisto-sln}. 
Thus we apply our embedding  criteria to some special subgroup $G'$ of $Sl_m(\Z_p)$  generated by two elements. 
And instead of considering number fields of large degree, namely $\Q(\zeta_p)$, we reduce the study of the existence of Galois representations with open image, to properties of certain imaginary quadratic extensions. 

\medskip

In this work we restrict our attention to the problem introduced by Greenberg  \cite{Greenberg} for the group $Sl_m(\Z_p)$. But it seems likely the methods we introduce will apply more generally for realizing other groups as well.

\medskip

The paper contains 4 sections. In Section 2 and in Section 3,  we recall facts from the maximal pro-$p$-extension of a number field unramified outside $p$, then  generalities regarding uniform groups and  $\Z_p$-Lie algebras.
 In Section 4,   we develop the approach of lifting mod $p^k$ representations as a question of  embedding problem; in particular we give criteria for lifting in some given uniform group (Theorem \ref{theo_main}).
 The last section is devoted to applications; in particular we prove the results presented in the Introduction.

\

{\bf Notations.} 
Throughout this article $p$ is a prime number.

$\bullet$ If $M$ is a finitely generated $\Z_p$-module, set $d_p M:=dim_{\fq_p} M/M^p$, $M[p]:=\{m\in M, m^p=1\}$, and $Tor(M)=\{m \in M, \exists k, m^{p^k}=1\}$.

$\bullet$ If $G$ is a pro-$p$ group, set $G^{ab}:=G/[G,G]$, $G^{p,el}:=G^{ab}/(G^{ab})^p$, and $d_p G:=d_p G^{ab}$.

$\bullet$ If $A$ is a Hausdorff, abelian and locally compact topological group, set $A^\wedge$ the Pontryagin dual of $A$.

\smallskip

For the computations  we have used the program PARI/GP \cite{pari}.

\smallskip

\section{On the maximal pro-$p$ extension unramified outside $p$: the results we need}

\subsection{On pro-$p$ groups} For classical properties on cohomology and homology of pro-$p$ groups, see for example \cite[Chapters I and II]{NSW}. 

Let $1 \longrightarrow G \longrightarrow \Gamma \longrightarrow \Delta \longrightarrow 1$ be an exact sequence of profinite groups where $G$ is a finitely presented pro-$p$ group, and $\Delta$ is a finite group of order coprime to $p$.
Recall that by the Schur-Zassenhaus Theorem one has  $\Gamma \simeq G \ltimes \Delta$.

\begin{prop} \label{prop_H2}
 Let $M$ be a finite $\Gamma$-module of exponent $p$ on which $G$ acts trivially. Then for $i\geq 1$, we have the isomorphism:
 $H^i(\Gamma,M) \simeq (H^i(G,\Z/p)\otimes M)^\Delta.$
\end{prop}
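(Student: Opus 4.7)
The plan is to use the Hochschild–Serre spectral sequence combined with the hypothesis that $|\Delta|$ is coprime to $p$, which will collapse the sequence onto a single column.

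\medskip

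First, I would set up the Hochschild–Serre spectral sequence associated to the extension $1 \to G \to \Gamma \to \Delta \to 1$ with coefficients in $M$:
\[ E_2^{s,t} = H^s(\Delta, H^t(G,M)) \Longrightarrow H^{s+t}(\Gamma, M). \]
Because $M$ has exponent $p$, each group $H^t(G,M)$ is annihilated by $p$; since $|\Delta|$ is coprime to $p$, a standard averaging/trace argument shows that $H^s(\Delta, N) = 0$ for all $s \geq 1$ and all $\Delta$-modules $N$ of $p$-power exponent. Hence $E_2^{s,t} = 0$ for $s \geq 1$, the spectral sequence degenerates, and we obtain a natural isomorphism
\[ H^i(\Gamma, M) \;\simeq\; H^0(\Delta, H^i(G,M)) \;=\; H^i(G,M)^{\Delta}. \]

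\medskip

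Second, I would identify $H^i(G,M)$ with $H^i(G,\Z/p)\otimes M$ as $\Delta$-modules. Since $G$ acts trivially on $M$ and $M$ has exponent $p$, choosing an $\fq_p$-basis of $M$ yields an isomorphism of $G$-modules $M \simeq (\Z/p)^r$ and consequently
\[ H^i(G,M) \;\simeq\; H^i(G,\Z/p) \otimes_{\Z/p} M, \]
compatible with the conjugation/diagonal $\Delta$-action: on the left side $\Delta$ acts through its conjugation action on $G$ (a lift being given by the Schur–Zassenhaus splitting $\Gamma \simeq G \rtimes \Delta$) and through its action on $M$, while on the right side it acts diagonally on the two tensor factors. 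The isomorphism is functorial in $M$, so it is $\Delta$-equivariant. Taking $\Delta$-invariants then yields the claimed identification
\[ H^i(\Gamma, M) \;\simeq\; \bigl(H^i(G,\Z/p)\otimes M\bigr)^{\Delta}. \]

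\medskip

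The only subtle point, and the step I would be most careful about, is the $\Delta$-equivariance of the identification $H^i(G,M) \simeq H^i(G,\Z/p)\otimes M$: one must check that the action of $\Delta$ induced on $H^i(G,M)$ via the semi-direct product structure is indeed the diagonal action coming from the action on $H^i(G,\Z/p)$ (via conjugation on $G$) and the action on $M$. This is essentially formal once one writes down the action on the bar resolution, but it is the place where a non-trivial verification is hidden.
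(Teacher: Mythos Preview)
Your proof is correct and follows essentially the same strategy as the paper's: first collapse the Hochschild--Serre spectral sequence using $p\nmid|\Delta|$ to get $H^i(\Gamma,M)\simeq H^i(G,M)^\Delta$, then identify $H^i(G,M)\simeq H^i(G,\Z/p)\otimes M$ as $\Delta$-modules. The only difference is in how that last identification is justified: the paper passes through homology via the universal coefficient theorem (writing the explicit map $F([f]\otimes m)=[f\otimes m]$ on $H_i(G,\Z/p)\otimes M^\wedge$ and then applying Pontryagin duality), whereas you argue directly in cohomology using the natural map $[c]\otimes m\mapsto [c\cdot m]$ and functoriality in $M$; both are valid ways to nail down the $\Delta$-equivariance you flagged as the delicate point.
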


\begin{proof}
 First, by the algebraic universal coefficients Theorem for $G$-homology  over $\fq_p$, one has the isomorphism
 \begin{equation}\displaystyle{\label{equation1} F :  H_i(G,\Z/p)\otimes {M}^\wedge \ {\isomto}  \ H_i(G, {M}^\wedge),}\end{equation}
 where the tensor product is taken over $\fq_p$, and where $F$ is defined by
 $$F([f]\otimes m)=[f\otimes m],$$ showing that (\ref{equation1}) is also an isomorphism of $\Delta$-modules. See for example \cite[Chapter VI, \S 15, Theorem 15.1]{Hilton-Stammbach}.
 By Pontryagin duality, we obtain $H^i(G,M) \simeq H^i(G,\Z/p)\otimes M$, as $\Delta$-modules.
 Since $|\Delta|$ is coprime to $p$, by the Hochschild-Serre spectral sequence one also has $H^i(\Gamma,M)\simeq H^i(G,M)^\Delta$ (see for example  \cite[Chapter II, \S 1, Lemma 2.1.2]{NSW}).
 By combining these two observations we finally obtain the claimed isomorphism.
\end{proof}

Let us write $G^{ab} \simeq \Z_p^t \oplus \T$, where $\T$ is the torsion subgroup of $G^{ab}$.

\begin{prop} \label{prop_H2_Tor}  
 Let $M$ be a finite $\Gamma$-module of exponent $p$ on which $G$ acts trivially. 
If $H^2(G,\Q_p/\Z_p)=0$ then  $H^2(G,M) \simeq \big({\T[p]}^\wedge\otimes M\big)^\Delta$.
\end{prop}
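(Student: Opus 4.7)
The plan is to combine Proposition \ref{prop_H2} with an explicit computation of $H^2(G,\Z/p)$. The intermediate target is a $\Delta$-equivariant isomorphism
$$H^2(G,\Z/p) \simeq \T[p]^\wedge;$$
once this is established, applying Proposition \ref{prop_H2} with $i=2$ immediately delivers $H^2(\Gamma,M)\simeq (\T[p]^\wedge\otimes M)^\Delta$, which is the announced formula.

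For the intermediate step I would use the long exact sequence in continuous $G$-cohomology attached to the short exact sequence of trivial coefficients
$$0\longrightarrow \Z/p\longrightarrow \Q_p/\Z_p \stackrel{p}{\longrightarrow}\Q_p/\Z_p\longrightarrow 0.$$
Under the vanishing hypothesis $H^2(G,\Q_p/\Z_p)=0$ this sequence collapses to a connecting isomorphism $H^1(G,\Q_p/\Z_p)/p \isomto H^2(G,\Z/p)$. Pontryagin duality identifies $H^1(G,\Q_p/\Z_p)=\mathrm{Hom}_{\mathrm{cont}}(G^{ab},\Q_p/\Z_p)=(G^{ab})^\wedge$, and the given decomposition $G^{ab}\simeq \Z_p^t\oplus \T$ rewrites this as $(\Q_p/\Z_p)^t\oplus \T^\wedge$. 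Since $\Q_p/\Z_p$ is $p$-divisible, the torsion-free piece dies after reduction modulo $p$, leaving $\T^\wedge/p\T^\wedge$. The standard duality identity $\T^\wedge/p\T^\wedge\simeq \T[p]^\wedge$ for finite abelian $p$-groups (obtained by dualizing $0\to\T[p]\to \T\stackrel{p}{\to}p\T\to 0$ and invoking $\T^{\wedge\wedge}=\T$, or directly on cyclic summands) completes the identification.

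Every step in the chain is functorial, so the composed isomorphism is automatically $\Delta$-equivariant, and this is the main (indeed essentially only) technical point to watch: particular care is needed with the contravariance of Pontryagin duality and with the fact that $\Delta$ acts on $\T$ via the conjugation action lifted from $\Gamma$. But this equivariance is forced by naturality and requires no additional argument, and the remaining cohomological manipulations are entirely routine.
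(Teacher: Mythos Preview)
Your proof is correct and follows essentially the same strategy as the paper: both arguments reduce to establishing the $\Delta$-equivariant isomorphism $H^2(G,\Z/p)\simeq \T[p]^\wedge$ and then invoke Proposition~\ref{prop_H2}. The only cosmetic difference is that the paper runs the Bockstein in homology (from $0\to\Z_p\to\Z_p\to\Z/p\to 0$, using $H_2(G,\Z_p)^\wedge\simeq H^2(G,\Q_p/\Z_p)=0$) and then dualizes, whereas you run the dual Bockstein directly in cohomology (from $0\to\Z/p\to\Q_p/\Z_p\to\Q_p/\Z_p\to 0$); these are the same computation on opposite sides of Pontryagin duality.
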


\begin{proof}
 By taking the $G$-homology of the exact sequence $0 \longrightarrow \Z_p \longrightarrow \Z_p \longrightarrow \Z/p\Z \longrightarrow 0$, we get the exact sequence of $\fq_p[\Delta]$-modules
$$\xymatrix{ H_2(G,\Z_p)/p \ar@{->}[r] & H_2(G,\Z/p\Z) \ar@{->>}[r]& H_1(G,\Z_p)[p] .}$$ 
After observing that $H_2(G,\Z_p)^\wedge \simeq H^2(G,\Q_p/\Z_p)=0$, then $H^2(G,\Z/p)$ is isomorphic to  $\big(H_1(G,\Z_p)[p]\big)^\wedge \simeq \T[p]^\wedge$, and we conclude with  Proposition~\ref{prop_H2}.
\end{proof}
By the way, the  proof of Proposition \ref{prop_H2_Tor} allows us to obtain:
\begin{prop} \label{prop2}
 One has $$d_pH^1(G,\Z/p\Z)-d_pH^2(G,\Z/p\Z)=t-d_pH_2(G,\Z_p).$$
\end{prop}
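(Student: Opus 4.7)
The plan is to re-use the long exact sequence in $G$-homology associated to $0 \to \Z_p \xrightarrow{\times p} \Z_p \to \Z/p\Z \to 0$ that already powers the proof of Proposition \ref{prop_H2_Tor}, but to extract from it the piece around $H_1$ as well. Pontryagin duality then converts the computation to cohomology, via $d_p H^i(G,\Z/p\Z)=d_p H_i(G,\Z/p\Z)$ for $i=1,2$, since $\Z/p\Z$ is self-dual.

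First I would write out the long exact sequence and use that $\Z_p$ is $p$-torsion-free: this splits the sequence into two short exact pieces, namely
\[0 \longrightarrow H_2(G,\Z_p)/p \longrightarrow H_2(G,\Z/p\Z) \longrightarrow H_1(G,\Z_p)[p] \longrightarrow 0,\]
together with an isomorphism $H_1(G,\Z_p)/p \isomto H_1(G,\Z/p\Z)$, the latter because $H_0(G,\Z_p)=\Z_p$ has trivial $p$-torsion, so the connecting homomorphism $H_1(G,\Z/p\Z) \to H_0(G,\Z_p)[p]$ vanishes.

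Using $H_1(G,\Z_p)\simeq G^{ab}\simeq \Z_p^t \oplus \T$, I would read off $d_p H_1(G,\Z/p\Z) = t + d_p\T$ and $d_p H_2(G,\Z/p\Z) = d_p H_2(G,\Z_p) + d_p \T[p]$. I would then invoke the identity $d_p \T = d_p \T[p]$ for the finite abelian $p$-group $\T$, which is an immediate consequence of the structure theorem; subtracting the two formulas yields the claimed $t-d_p H_2(G,\Z_p)$.

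I do not anticipate a genuine obstacle: everything reduces to a dimension count on the exact sequence already assembled in the proof of Proposition \ref{prop_H2_Tor}. The only details worth a second look are the vanishing of the connecting map at $H_1$ and the identity $d_p \T = d_p \T[p]$, both of which are automatic.
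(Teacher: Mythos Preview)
Your proposal is correct and is essentially the argument the paper has in mind: the paper does not spell out a separate proof but only says that Proposition~\ref{prop2} follows from the computation in the proof of Proposition~\ref{prop_H2_Tor}, namely the long exact sequence in $G$-homology attached to $0\to\Z_p\xrightarrow{p}\Z_p\to\Z/p\to 0$. You have simply made explicit the two extra ingredients left implicit there (injectivity of $H_2(G,\Z_p)/p\to H_2(G,\Z/p)$ and the identification $H_1(G,\Z_p)/p\simeq H_1(G,\Z/p)$ coming from $H_0(G,\Z_p)[p]=0$), together with the harmless duality $d_pH^i(G,\Z/p)=d_pH_i(G,\Z/p)$ and $d_p\T=d_p\T[p]$.
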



\subsection{Restricted ramification}  \label{section_restricted_ramification}
Let $K$ be a number field. To simplify when $p=2$ we assume $K$ totally imaginary.
Set
\begin{itemize}
    \item[$\bullet$] $E_K:=\Z_p\otimes \O_K^\times$ the pro-$p$ completion of the  group of units of  the ring of integers $\O_K$ of $K$,
     \item[$\bullet$] $Cl_K$ the $p$-Sylow of the class group of $K$,
   \item[$\bullet$]  $K_\p$ the completion of $K$ at $\p|p$, $U_\p$ the local units of $K_\p$,
\item[$\bullet$]    $\displaystyle{\U_\p:= \lim_{\stackrel{\longleftarrow}{n}} U_\p/U_\p^{p^n}}$  the pro-$p$ completion of  $U_\p$, and $\displaystyle{\U_p:=\prod_{\p|p}\U_\p}$,
\item[$\bullet$] $\iota_{K,p} : E_K \rightarrow \U_p$  the diagonal embedding of $E_K$ into $p$-adic units.
\end{itemize}

\subsubsection{The pro-$p$ group $G_{K,p}$}

Let $K_p/K$ be the maximal pro-$p$ extension of $K$ unramified outside $p$; set $G_{K,p}=Gal(K_p/K)$.
The pro-$p$ group $G_{K,p}$ is finitely presented. More precisely, one has (see \cite[Chapter VIII, Proposition 8.3.18; Chapter X, Corollary 10.4.9, Theorem 10.7.13]{NSW}):

\begin{theo} \label{theo_GS}
 The pro-$p$ group $G_{K,p}$ is of cohomological dimension $1$ or $2$, and
 $d_p H^1(G_{K,p},\Z/p) - d_pH^2(G_{K,p},\Z/p)=r_2+1$.
\end{theo}
Here as usual $(r_1,r_2)$ is the signature of $K$.

Let us write $G_{K,p}^{ab}\simeq \ff_{K,p}\oplus \T_{K,p}$, where $\T_{K,p}:=Tor(G_{K,p}^{ab})$ is the torsion of $G_{K,p}^{ab}$, and where   $\ff_{K,p}:=G_{K,p}^{ab}/\T_{K,p} \simeq  \Z_p^{t_p}$ is the free part; the quantity  $t_p$ is the $\Z_p$-rank of $G_{K,p}^{ab}$.
By class field theory one has:
\begin{eqnarray} \label{se1} t_p=dim_{\Q_p} coker (\iota_{K,p})=r_2+1 + dim_{\Q_p} ker(\iota_{K,p}).
 \end{eqnarray} 
 (See for example \cite[Chapter III, \S 1, Corollary 1.6.3]{gras}.)
 
 Recall also that Leopoldt's conjecture asserts that $ker(\iota_{K,p})=1$, and 
 thanks to Baker and Brumer \cite{Brumer} one knowns that Leopoldt's conjecture is true for abelian extensions $K/\Q$. 
One also has the following well-known result (see for example \cite[Chapter X, Corollary 10.3.7]{NSW}):
\begin{prop} \label{prop_injectivity_iota}One has
  $ker(\iota_{K,p})=1 \Longleftrightarrow H_2(G_{K,p},\Z_p)=1$.
\end{prop}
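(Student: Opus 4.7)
The plan is to chain together three ingredients already available in the excerpt: Proposition \ref{prop2} applied to $G_{K,p}$, Theorem \ref{theo_GS}, and the class field theoretic formula \eqref{se1}. These yield a numerical identity relating the Leopoldt defect of $K$ to the mod $p$ rank of $H_2(G_{K,p},\Z_p)$; the two promotion steps ``numerical vanishing $\Leftrightarrow$ true vanishing'' on each side are then routine.

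\textbf{Step 1: the key identity.} I apply Proposition \ref{prop2} to $G=G_{K,p}$ (so $t=t_p$), obtaining
$$d_p H^1(G_{K,p},\Z/p\Z)-d_p H^2(G_{K,p},\Z/p\Z) \;=\; t_p - d_p H_2(G_{K,p},\Z_p).$$
By Theorem \ref{theo_GS}, the left-hand side equals $r_2+1$. Substituting the class field theoretic identity \eqref{se1}, namely $t_p = r_2+1 + \dim_{\Q_p}\ker(\iota_{K,p})$, the term $r_2+1$ cancels on both sides and I obtain
$$\dim_{\Q_p}\ker(\iota_{K,p}) \;=\; d_p H_2(G_{K,p},\Z_p). \qquad (\star)$$

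\textbf{Step 2: from $(\star)$ to the equivalence.} For the units side, I observe that $\iota_{K,p}$ is injective on the $\Z_p$-torsion of $E_K$, which consists of the $p$-power roots of unity of $K$ and embeds nontrivially into any completion $K_\p$. Hence $\ker(\iota_{K,p})$ is $\Z_p$-torsion-free, and it vanishes if and only if its $\Q_p$-rank does. For the homology side, $H_2(G_{K,p},\Z_p)$ is a finitely generated $\Z_p$-module because $G_{K,p}$ is finitely presented, so by Nakayama's lemma it vanishes if and only if its mod $p$ reduction does. Combining these two observations with $(\star)$ gives the claimed equivalence $\ker(\iota_{K,p})=1 \Leftrightarrow H_2(G_{K,p},\Z_p)=1$.

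\textbf{Main obstacle.} The only non-trivial input, beyond the assembly of formulas already stated in the excerpt, is the Nakayama step in the converse direction, which uses the $\Z_p$-finite generation of $H_2(G_{K,p},\Z_p)$; this is standard but relies on structural facts about $G_{K,p}$ (finite presentation and $\mathrm{cd}_p \leq 2$). Once this is granted, the proof is purely a matter of bookkeeping.
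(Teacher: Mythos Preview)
Your proof is correct and follows essentially the same route as the paper: combine Proposition~\ref{prop2}, Theorem~\ref{theo_GS}, and \eqref{se1} to obtain $\dim_{\Q_p}\ker(\iota_{K,p}) = d_p H_2(G_{K,p},\Z_p)$, then promote numerical vanishing to actual vanishing on each side. The only cosmetic difference is that the paper deduces $H_2(G_{K,p},\Z_p)=1$ from $d_p H_2=0$ by observing this homology group is an abelian pro-$p$ group (so trivial Frattini quotient forces triviality), whereas you invoke Nakayama via finite generation; both arguments are valid.
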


\begin{proof}
By Proposition  \ref{prop2} and Theorem \ref{theo_GS} one has:
$$t_p- d_pH_2(G_{K,p},\Z_p)= r_2+1;$$
thus by combining with 
 $(\ref{se1})$, we get:
$dim_{\Q_p} ker(\iota_{K,p})= d_pH_2(G_{K,p},\Z_p)$.
 Observe now that $H_2(G_{K,p},\Z_p)$ is an abelian pro-$p$ group,  then $H_2(G_{K,p},\Z_p)$ is trivial if and only if $d_p H_2(G_{K,p},\Z_p)=0$.
\end{proof}


Regarding $\T_{K,p}$, we have the following: 

\begin{prop}\label{prop_tor}
 Suppose $Cl_K=1$. Then $\T_{K,p} \simeq Tor\Big(\U_p/\iota_{K,p}(E_K)\Big)$.
\end{prop}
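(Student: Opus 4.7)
The statement is essentially a direct consequence of class field theory for restricted ramification applied to the maximal abelian pro-$p$ extension of $K$ unramified outside $p$, combined with the vanishing hypothesis on the $p$-class group.

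First, I invoke the standard class field theory exact sequence relating $G_{K,p}^{ab}$ to local and global units, namely
\begin{equation*}
0 \longrightarrow \U_p/\overline{\iota_{K,p}(E_K)} \longrightarrow G_{K,p}^{ab} \longrightarrow Cl_K \longrightarrow 0,
\end{equation*}
where the closure is taken in $\U_p$. A reference is \cite[Chapter III, \S 2]{gras}. Under the hypothesis (made in \S\ref{section_restricted_ramification}) that $K$ is totally imaginary when $p=2$, no archimedean correction is needed.

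Next I verify that the closure bar can be dropped. The group $E_K = \Z_p \otimes_\Z \O_K^\times$ is a finitely generated $\Z_p$-module, hence compact. The diagonal map $\iota_{K,p}:E_K\to\U_p$ is continuous, so its image $\iota_{K,p}(E_K)$ is the continuous image of a compact space, hence compact and in particular closed in $\U_p$. Thus $\overline{\iota_{K,p}(E_K)} = \iota_{K,p}(E_K)$, and the exact sequence above becomes
\begin{equation*}
0 \longrightarrow \U_p/\iota_{K,p}(E_K) \longrightarrow G_{K,p}^{ab} \longrightarrow Cl_K \longrightarrow 0.
\end{equation*}

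Now I apply the hypothesis $Cl_K = 1$, which yields an isomorphism of $\Z_p$-modules
\begin{equation*}
G_{K,p}^{ab} \;\simeq\; \U_p/\iota_{K,p}(E_K).
\end{equation*}
Taking $\Z_p$-torsion subgroups of both sides and using $\T_{K,p} = Tor(G_{K,p}^{ab})$ gives the desired conclusion
\begin{equation*}
\T_{K,p} \;\simeq\; Tor\bigl(\U_p/\iota_{K,p}(E_K)\bigr).
\end{equation*}

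There is no real obstacle here: the only substantive ingredient is the class field theory exact sequence, which is standard; the compactness argument ensuring $\iota_{K,p}(E_K)$ is already closed is elementary, and taking torsion commutes with the resulting isomorphism. The assumption $Cl_K = 1$ is what lets us avoid a torsion contribution coming from the class group on the right-hand side of the exact sequence.
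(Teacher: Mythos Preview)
Your proof is correct and follows essentially the same approach as the paper: both invoke class field theory to obtain $G_{K,p}^{ab} \simeq \U_p/\iota_{K,p}(E_K)$ under the hypothesis $Cl_K=1$, then take torsion. The paper compresses this into a single sentence, whereas you spell out the exact sequence and the compactness argument showing $\iota_{K,p}(E_K)$ is already closed; these are welcome details but do not constitute a different route.
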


\begin{proof}
By class field theory one has $\U_p/\iota_{K,p}(E_K) \simeq G_{K,p}^{ab}$ when $Cl_K=1$.
\end{proof}

Hence, given a number field $K$, up to a finite set of primes (those that divide $|Cl_K|$) the computation of $\T_{K,p}$ is reduced to the computation of the torsion of $\U_p/\iota_{K,p}(E_K)$. 
And having some nontrivial element in $Tor\big(\U_p/\iota_{K,p}(E_K)\big)$ is something that is rare; typically
one has the following conjecture (\cite[Conjecture 8.11]{Gras-CJM}).

\begin{conj}[Gras] \label{conjecture_gras} Given a number field $K$, then $\T_{K,p}=1$ for $p\gg 0$. 
\end{conj}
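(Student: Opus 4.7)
The plan is to translate the conjecture, via the class field theoretic identification already established, into a statement about $p$-adic regulators, and then attempt the (genuinely open) analytic core.

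First I would perform the harmless reduction to $Cl_K=1$: only finitely many primes divide $|Cl_K|$, so excluding them is compatible with a conclusion of the form ``for $p\gg 0$'', and after doing so Proposition~\ref{prop_tor} gives $\T_{K,p} \simeq \mathrm{Tor}\bigl(\U_p/\iota_{K,p}(E_K)\bigr)$. For $p$ larger than $[K:\Q]$ and unramified in $K/\Q$, no completion $K_\p$ contains a nontrivial $p$-th root of unity, so $\U_p$ is itself $\Z_p$-torsion-free of rank $[K:\Q]$. Next I would invoke Leopoldt's conjecture, known for abelian $K/\Q$ by Baker-Brumer and assumed in general, so that by Proposition~\ref{prop_injectivity_iota} the map $\iota_{K,p}$ is injective; choosing $p$ also larger than $|\mu(K)|$, the module $\iota_{K,p}(E_K)$ is then a free $\Z_p$-submodule of $\U_p$ of rank $r_1+r_2-1$.

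Applying the Smith normal form to this inclusion identifies $\mathrm{Tor}\bigl(\U_p/\iota_{K,p}(E_K)\bigr)$ with the torsion produced by the elementary divisors of the embedding, which after picking bases is controlled by the $p$-adic valuation of the $p$-adic regulator $R_p(K)$ (the determinant of the matrix of $p$-adic logarithms of a fundamental system of units read at the primes above $p$, well-defined up to $\Z_p^\times$). Thus the conjecture is equivalent to the statement: $R_p(K) \in \Z_p^\times$ for $p \gg 0$, equivalently $v_p(R_p(K))=0$ for all but finitely many $p$.

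The main obstacle lies precisely here, and this is where the conjecture is open. One is asking for a strong quantitative refinement of Leopoldt: not merely that $R_p(K)\neq 0$, but that it is a $p$-adic unit at almost every prime. The natural heuristic is to model $(\log_p \iota_{\p_j}(\varepsilon_i)) \in M_r(\Z_p)$ as a ``random'' matrix, which is unimodular with probability $\prod_{i\geq 1}(1-p^{-i}) = 1 - O(1/p)$; summing the tail over primes yields a convergent series, predicting that only finitely many $p$ are exceptional, in agreement with the conjecture. Turning this into a theorem seems to demand either new $p$-adic transcendence input in the spirit of Baker-Brumer controlling the \emph{$p$-adic valuation}, and not merely the nonvanishing, of linear combinations of $p$-adic logarithms of algebraic numbers, or an Iwasawa-theoretic argument tracking how the regulator of a fixed finite set of units varies with $p$. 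Neither ingredient is at our disposal from the tools developed in the preceding sections of this paper, so I do not see how to complete the argument with the present methods.
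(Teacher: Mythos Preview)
The statement you were asked to prove is not a theorem in the paper: it is explicitly labelled as a \emph{conjecture} (due to Gras), and the paper offers no proof whatsoever. Immediately after stating it the author writes only that ``many computations allow us to have some evidence, but very little is known in general'', and then moves on. So there is no ``paper's own proof'' against which to compare your attempt.

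That said, your reduction is sound and is indeed the standard way to understand what the conjecture is really asking. Discarding the finitely many $p$ dividing the class number, invoking Proposition~\ref{prop_tor}, and then (under Leopoldt) rewriting $\mathrm{Tor}\bigl(\U_p/\iota_{K,p}(E_K)\bigr)=0$ as the condition $v_p(R_p(K))=0$ on the $p$-adic regulator is exactly the reformulation one finds in Gras's own work and in the subsequent literature. Your heuristic probabilistic model and your diagnosis that a proof would require either a valuation-level strengthening of Baker--Brumer or new Iwasawa-theoretic input are both accurate; this is precisely why the conjecture remains open. One small caveat: you silently assume Leopoldt's conjecture for general $K$ in order to make the regulator interpretation clean, whereas Gras's conjecture is stated unconditionally; this does not affect the substance of your analysis but is worth flagging.

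In short: there is no gap to name because there is no proof to miss. You correctly identified the statement as open and gave the right explanation of where the difficulty lies.
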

Regarding this conjecture many computations allow us to have some evidence, but very little is known in general.
See \cite[Chapter IV, \S 3 and \S 4]{gras} and \cite{Gras-CMS} for a good exposition.  
 Nevertheless, the $p$-group $\T_{K,p}$ is a deep arithmetical object associated to $K$, as we can see from the following result, for example.
  
  \begin{prop}\label{prop_free}  The pro-$p$ group $G_{K,p}$ is free pro-$p$ (on $r_2+1$ generators) if and only if $ker(\iota_{K,p})=1$ and $\T_{K,p}=1$.
   \end{prop}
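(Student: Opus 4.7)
The plan is to prove the equivalence by combining Theorem~\ref{theo_GS}, Proposition~\ref{prop_H2_Tor} and Proposition~\ref{prop_injectivity_iota}, using the structural decomposition $G_{K,p}^{ab}\simeq \ff_{K,p}\oplus \T_{K,p}$ together with the standard fact that a finitely generated pro-$p$ group is free pro-$p$ if and only if it has $p$-cohomological dimension $\leq 1$, equivalently $H^2(G,\Z/p\Z)=0$.

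For the ``only if'' direction, I would assume $G_{K,p}$ is free pro-$p$ of rank $r_2+1$. Then $G_{K,p}^{ab}\simeq \Z_p^{r_2+1}$, which immediately gives $\T_{K,p}=1$ and $t_p=r_2+1$. Since any free pro-$p$ group satisfies $H_i(G,\Z_p)=0$ for $i\geq 2$, we have $H_2(G_{K,p},\Z_p)=1$, and Proposition~\ref{prop_injectivity_iota} yields $\ker(\iota_{K,p})=1$.

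For the ``if'' direction, suppose $\ker(\iota_{K,p})=1$ and $\T_{K,p}=1$. Proposition~\ref{prop_injectivity_iota} gives $H_2(G_{K,p},\Z_p)=1$, and by Pontryagin duality $H^2(G_{K,p},\Q_p/\Z_p)=0$. I would then apply Proposition~\ref{prop_H2_Tor} in the degenerate case $\Delta=1$, $\Gamma=G=G_{K,p}$, with the trivial module $M=\Z/p\Z$, to obtain
\[
H^2(G_{K,p},\Z/p\Z)\simeq \T_{K,p}[p]^\wedge.
\]
The hypothesis $\T_{K,p}=1$ kills the right-hand side, so $\operatorname{cd}_p(G_{K,p})\leq 1$ and $G_{K,p}$ is free pro-$p$. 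Its rank is $d_p(G_{K,p})=d_p(G_{K,p}^{ab})=t_p$, and equation~(\ref{se1}) applied with $\dim_{\Q_p}\ker(\iota_{K,p})=0$ gives $t_p=r_2+1$.

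The proof should essentially be bookkeeping once these ingredients are assembled; the only mild subtlety is that Proposition~\ref{prop_H2_Tor} is stated with an auxiliary coprime-order quotient $\Delta$, but its proof goes through verbatim when $\Delta=1$, which is the case we need here. Nothing deeper is required.
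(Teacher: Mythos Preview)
Your proof is correct and follows essentially the same route as the paper: both directions hinge on Proposition~\ref{prop_injectivity_iota} for the equivalence $\ker(\iota_{K,p})=1\Leftrightarrow H_2(G_{K,p},\Z_p)=1$, and on Proposition~\ref{prop_H2_Tor} with $\Delta=1$, $M=\Z/p$ to kill $H^2(G_{K,p},\Z/p)$ via $\T_{K,p}=1$. The only cosmetic difference is that you read off the rank $r_2+1$ from equation~(\ref{se1}), whereas the paper invokes Theorem~\ref{theo_GS}; these are equivalent once $H^2(G_{K,p},\Z/p)=0$.
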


\begin{proof} If $G_{K,p}$ is free pro-$p$  then $G_{K,p}^{ab}\simeq \Z_p^{t_p}$, $\T_{K,p}=1$, $H^2(G_{K,p},\Q_p/\Z_p)=0$, and by Proposition \ref{prop_injectivity_iota} one gets $ker(\iota_{K,p})=1$.

For the reverse, suppose that $ker(\iota_{K,p})=1$ and $G_{K,p}\simeq \Z_p^{t_p}$. By Proposition \ref{prop_injectivity_iota}, $H_2(G_{K,p},\Z_p)=0$;  by Proposition \ref{prop_H2_Tor}, one gets $H^2(G_{K,p},\Z/p)=0$ (take $\Delta$ trivial and $M=\Z/p$), and then $G_{K,p}$ is pro-$p$ free.

Regarding the $p$-rank of $G_{K,p}$, see Theorem \ref{theo_GS}.
\end{proof}

\begin{exem} \label{exemple_quad_imaginary}
 Take $p>3$, and let $K/\Q$ be an imaginary quadratic field. Observe that  $E_K=1$ and that $\U_p$ is torsion free. Hence when $Cl_K=1$, the pro-$p$ group $G_{K,p}$ is free pro-$p$  on $2$ generators.
\end{exem}

Finally, let us recall that when $G_{K,p}$ is free pro-$p$ then $K$ is said to be {\it $p$-rational} (\cite{Movahhedi-phd}).

\subsubsection{With semisimple action} \label{section_semisimple}

Let $\Delta$ be a finite group of order coprime to~$p$.
Let $\Psi_p$ be the set of irreducible $\Com_p$-characters of $\Delta$. 
Let $M$ be a finite  $\fq_p\lbrack \Delta \rbrack$-module. For $\varphi \in \Psi_p$, set $r_\varphi M$
the $\varphi$-rank of $M$: that is the number of times that $\varphi$  appears in the  decomposition of $M$ as $\fq_p\lbrack \Delta \rbrack$-module. In particular if $\chi(M)$ denotes the character of $M$, then $\chi(M)=\sum_{\varphi \in \Psi_p} r_\varphi \varphi$.
Observe that for a finite $\Z_p\lbrack \Delta \rbrack$-module $M$, one has $\chi(M/M^p)=\chi(M[p])$.

\begin{defi}
Two finite $\fq_p[\Delta]$-modules $M$ and $N$ are said to be {\it orthogonal}, and write $M\perp N$, if  for every $\varphi \in \Psi_p$ one has $r_\varphi M \cdot r_\varphi N=0$. 
\end{defi}

Since $\chi(M\otimes N)=\chi(M) \chi(N)$ and $\chi(M^\wedge)=\chi(M)^{-1}$, one has:
\begin{lemm}\label{lemm_character_trivial}
 Let $M$ and $N$ be two finite $\fq_p\lbrack \Delta \rbrack$-modules.
 
 Then $\Big(M^\wedge \otimes N\Big)^\Delta=0$ if and only if $M\perp N$. 
\end{lemm}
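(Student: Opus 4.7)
The plan is to reduce the statement to character theory via semisimplicity. Since $|\Delta|$ is coprime to $p$, Maschke's theorem guarantees that the group algebra $\fq_p[\Delta]$ is semisimple, and hence every finite $\fq_p[\Delta]$-module decomposes as a direct sum of simples. For such a module $A$, the invariants $A^\Delta$ vanish if and only if the trivial character $\1$ does not occur in $A$, that is, if and only if $r_\1(A)=0$. So the lemma reduces to computing $r_\1(M^\wedge\otimes N)$.

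Applying the author's remarks $\chi(M\otimes N)=\chi(M)\chi(N)$ and $\chi(M^\wedge)=\chi(M)^{-1}$, one has
\[
\chi\bigl(M^\wedge\otimes N\bigr)\;=\;\chi(M)^{-1}\chi(N).
\]
Writing $\chi(M)=\sum_\varphi r_\varphi(M)\,\varphi$ and $\chi(N)=\sum_\varphi r_\varphi(N)\,\varphi$ in the basis $\Psi_p$ of irreducible $\Com_p$-characters, and using orthonormality of the $\varphi$ with respect to the standard inner product on class functions, I would compute
\[
r_\1\bigl(M^\wedge\otimes N\bigr)\;=\;\bigl\langle \chi(M)^{-1}\chi(N),\,\1\bigr\rangle\;=\;\bigl\langle \chi(N),\chi(M)\bigr\rangle\;=\;\sum_{\varphi\in\Psi_p} r_\varphi(M)\,r_\varphi(N).
\]

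Finally, since the $r_\varphi(M)$ and $r_\varphi(N)$ are non-negative integers, the sum on the right vanishes if and only if every term vanishes, which is precisely the definition of $M\perp N$. Combining with the first paragraph, $(M^\wedge\otimes N)^\Delta=0$ if and only if $M\perp N$, as required.

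The only delicate point is the mismatch between characters taken over $\Com_p$ and modules defined over $\fq_p$; but since Galois-conjugate $\Com_p$-characters contribute to the same $\fq_p$-simple summand with equal multiplicity, grouping the contributions does not change which products $r_\varphi(M)r_\varphi(N)$ vanish, so the character-theoretic computation transfers without modification. I expect no serious obstacle beyond this bookkeeping.
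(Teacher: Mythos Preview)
Your proposal is correct and follows the same route the paper indicates: the paper's entire justification is the sentence preceding the lemma, namely that $\chi(M\otimes N)=\chi(M)\chi(N)$ and $\chi(M^\wedge)=\chi(M)^{-1}$, and you have simply unpacked this character computation in full, including the observation that $(M^\wedge\otimes N)^\Delta=0$ is equivalent to $r_{\1}(M^\wedge\otimes N)=0$ and the final remark on the $\fq_p$ versus $\Com_p$ bookkeeping.
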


We denote by $\Reg$ the character of the regular representation,  by $\1$ the trivial character, and for a subgroup $D$ of $\Delta$, by $\Ind_D^\Delta\1_D$ the induced character from $D$ to $\Delta$ of the trivial character  $\1_D$ of $D$.

\medskip

For the end of this section, let us consider the following frame.

Let $K/k$ be a finite Galois extension of degree coprime to $p$; put $\Delta=Gal(K/k)$. Observe that $K_p/k$ is Galois and that $\Delta$ acts on $G_{K,p}$, $\T_{K,p}$, $\ff_{K,p}$, etc. 
Put $\Gamma=Gal(K_p/k)\simeq G_{K,p} \ltimes \Delta$.

As we will see, we need that the two pieces $\ff_{K,p}$ and $\T_{K,p}$ of  $G_{K,p}^{ab}$ must be 
orthogonal to each other (as $\Delta$-modules). 
First, the next Theorem will be essential to lift residual representation.

\begin{theo} \label{theo_H2_semisimple}  Let $M$ be a finite $ \Gamma $-module of exponent $p$ on which $G_{K,p}$ acts trivially. Assuming Leopoldt's conjecture for $K$ at $p$, then $H^2(\Gamma,M) \simeq \big({\T_{K,p}[p]}^\wedge\otimes M\big)^\Delta$.
 In particular  $H^2(\Gamma,M)=0$ if and only if $\T_{K,p}[p]\perp M$.
\end{theo}

\begin{proof}
 This is a consequence of Proposition \ref{prop_H2_Tor}, Proposition \ref{prop_injectivity_iota} and Lemma \ref{lemm_character_trivial}.
\end{proof}

\begin{rema} 
 When $K$ contains $\zeta_p$,   the character of  $\T_{K,p}[p]$ is related to the mirror character of  $Cl_K'$, where $Cl_K'$ is the $p$-Sylow of the $p$-class group of $K$.
 Typically when  $K=\Q(\zeta_p)$,     $r_\varphi \T_{K,p}[p]=r_{\varphi^*}Cl_K$, where $\varphi^*:=\omega \varphi^{-1}$.
  And in this case, $\Q(\zeta_p)$ is $p$-rational if and only if $p$ is regular. For more general results see \cite{gras2}.
\end{rema}

To finish, the following proposition will be the starting point for realizing residual representations as Galois extensions of number fields.

\begin{prop} \label{prop_character_Gp} Assuming the Leopoldt conjecture for $K$ at $p$, one has $$\chi(\ff_{K,p}/p)=\1+n\Reg -\sum_{v|\infty} \Ind_{D_v}^G \1_{D_v},$$
where $n=[k:\Q]$.
 In particular if $K/k$ is a CM-field one has  $\chi(\ff_{K,p}/p)=\1+n \varphi$, where $\varphi$ is the nontrivial character of $Gal(K/k)$.
\end{prop}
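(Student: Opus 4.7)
The plan is to read off $\chi(\ff_{K,p}/p)$ as the character of $\ff_{K,p}\otimes\Q_p$, using class field theory and Dirichlet's unit theorem. Since $\ff_{K,p}$ is a free $\Z_p$-module and $|\Delta|$ is coprime to $p$, the Brauer character of $\ff_{K,p}/p$ coincides with the character of $\ff_{K,p}\otimes\Q_p$ as a $\Q_p[\Delta]$-module, so it suffices to work rationally.

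First, I would invoke class field theory over $K$: one has an exact sequence of $\Z_p[\Delta]$-modules
\begin{equation*}
0\longrightarrow E_K\stackrel{\iota_{K,p}}{\longrightarrow} \U_p\longrightarrow G_{K,p}^{ab}\longrightarrow Cl_K\longrightarrow 0.
\end{equation*}
Under Leopoldt's conjecture, $\iota_{K,p}$ is injective (Proposition \ref{prop_injectivity_iota}). Tensoring with $\Q_p$ kills the finite pieces $\T_{K,p}$ and $Cl_K$, so
\begin{equation*}
\chi(\ff_{K,p}\otimes\Q_p)=\chi(\U_p\otimes\Q_p)-\chi(E_K\otimes\Q_p).
\end{equation*}

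Next I would compute each of the two characters separately. For $\U_p\otimes\Q_p$, the $p$-adic logarithm induces a $\Delta$-equivariant isomorphism $\U_p\otimes\Q_p\simeq K\otimes_\Q\Q_p$; since $K/k$ is Galois, the normal basis theorem gives $K\simeq k[\Delta]$ as $k[\Delta]$-modules, so as a $\Q[\Delta]$-module $K$ is free of rank $n=[k:\Q]$. Hence $\chi(\U_p\otimes\Q_p)=n\,\Reg$. For $E_K\otimes\Q_p$, Dirichlet's unit theorem gives a $\Delta$-equivariant isomorphism
\begin{equation*}
(E_K\otimes\R)\oplus\R\ \simeq\ \bigoplus_{w\mid\infty,\ w\text{ of }K}\R\ \simeq\ \bigoplus_{v\mid\infty,\ v\text{ of }k}\Ind_{D_v}^{\Delta}\R,
\end{equation*}
where the last identification comes from the fact that the permutation representation of $\Delta$ on the archimedean places of $K$ above a fixed $v$ of $k$ is the induced representation $\Ind_{D_v}^\Delta\1_{D_v}$. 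Thus $\chi(E_K\otimes\Q_p)=\sum_{v\mid\infty}\Ind_{D_v}^{\Delta}\1_{D_v}-\1$. Subtracting yields the announced formula
\begin{equation*}
\chi(\ff_{K,p}/p)=\1+n\Reg-\sum_{v\mid\infty}\Ind_{D_v}^\Delta\1_{D_v}.
\end{equation*}

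Finally, for the CM specialization, I would note that $k$ is totally real and each of its $n$ archimedean places becomes complex in $K$, so the decomposition group $D_v$ equals the full group $\Delta=\{1,\varphi\}$ at every infinite $v$; hence $\Ind_{D_v}^\Delta\1_{D_v}=\1$ and the sum contributes $n\,\1$. Combined with $\Reg=\1+\varphi$, this gives $\chi(\ff_{K,p}/p)=\1+n(\1+\varphi)-n\,\1=\1+n\varphi$.

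The only potentially delicate point is the passage from the character of the $\Z_p$-free module $\ff_{K,p}$ to that of its mod-$p$ reduction, but this is routine since $p\nmid |\Delta|$ makes $\Z_p[\Delta]$ a maximal order and reduction mod $p$ induces a bijection between irreducible $\Q_p[\Delta]$- and $\fq_p[\Delta]$-modules preserving characters; there is no serious obstacle.
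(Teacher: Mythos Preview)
Your argument is correct and follows the same route as the paper's proof: identify $\Q_p\otimes\ff_{K,p}$ with $\Q_p\otimes\U_p\big/\Q_p\otimes\iota_{K,p}(E_K)$, then compute the two characters via the $p$-adic logarithm (normal basis) and Dirichlet--Herbrand. The paper's proof is a two-line sketch that defers the character computations to \cite[\S 5 Theorem 5.12, and \S 6]{gras2}, whereas you spell them out, but the substance is the same.
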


\begin{proof} One has $\Q_p\otimes \ff_{K,p}=\Q_p\otimes \U_p \Big/ \Q_p \otimes  \iota_{K,p}(E_K)$.
 Then use for example \cite[\S 5 Theorem 5.12, and \S 6]{gras2}.
\end{proof}


\section{Uniform groups and Lie algebras}

\subsection{Generalities}
For this section we refer to \cite[Chapters 4, 7 and 9]{DSMN}.

Let $G$ be a finitely generated pro-$p$ group. Set $G_1=G$, and for $n\geq 1$, $G_{n+1}=G^p_n[G,G_n]$. The  $(G_n)$ is the $p$-descending central series of $G$.  
  For $n\geq 1$, consider the morphism: $$ \begin{array}{rcl}
                 \alpha_n : G_n/G_{n+1} & \rightarrow& G_{n+1}/G_{n+2} \\
                 x &\mapsto & x^p.
                 \end{array}
  $$
  
 \begin{defi}  The pro-$p$ group $G$ is said to be {\it uniform} if  for every $n$, the map $\alpha_n$ is an isomorphism.
 \end{defi}
 
Hence when $G$ is uniform, there exists some $d$ such that $G_n/G_{n+1} \simeq (\Z/p)^d$; the integer~$d$ is called the   dimension of~$G$.

\begin{theo} \label{prop_uniform} Let $G$ be a uniform pro-$p$ group. Then for all $n\geq 1$, $G_{n+1}$ is uniform and  also equal to:
 \begin{itemize}
  \item[$(i)$] $G_n^p[G_n,G_n]$,
  \item[$(ii)$] $G^{p^{n}}=\langle g^{p^n} ,g \in G \rangle=\{g^{p^n}, g\in G\}$,
  \item[$(iii)$] $(G_n)^p$.
 \end{itemize}
\end{theo}

\begin{proof}
 See \cite[Chapter 3, Theorem 3.6]{DSMN}.
\end{proof}

Recall that a {\it $p$-adic analytic group} is a topological group  $G$ having a structure of $p$-adic analytic manifold for which the addition and the inverse are analytic.
Since Lazard \cite{Lazard} one knows that uniform pro-$p$ groups are the socle of $p$-adic analytic groups. 
Indeed:

\begin{theo}\label{theo_uniform}
 
 $(i)$ A  uniform  group $G$ of dimension $d$  is a $p$-adic analytic  group of dimension $d$ (as analytic manifold).
 
 $(ii)$ Every $p$-adic analytic group of (analytic) dimension $d$ contains an open subgroup which is uniform of dimension $d$.
 
 $(iii)$ Let $G$ be a pro-$p$ group which is a $p$-adic analytic group,  then $G\hookrightarrow Gl_m(\Z_p)$ for some $m$.
\end{theo}

\begin{proof}
See \cite[Interlude A]{DSMN}. 
\end{proof}

In what follows, we will  consider uniform groups $G$ as subgroups of $Gl_m(\Z_p)$.

\subsection{Exponential and logarithm}

\subsubsection{The Lie algebras $\gl_m$ and $\sl_m$} 
Set $\varepsilon=0$ if $p>2$, and $\varepsilon=1$ if $p=2$. 

\smallskip 

Take $m\geq 2$.
Let $\gl_m$ be the $\Z_p$-free module of dimension $m^2$  generated by the matrices $E_{i,j}(p):=p^{1+\varepsilon}E_{i,j}$, where $E_{i,j}$ are the elementary matrices.
Then $\gl_m$ is a  $\Z_p$-Lie algebra, subalgebra of the algebra $\gl_m(\Q_p)$ of the  matrices of size $m\times m$ with coefficients in $\Q_p$, equipped with the Lie bracket $(A,B)=AB-BA$.

It is not difficult to see that $(\gl_m,\gl_m) \subset p^{1+\varepsilon}\ \gl_m$: the algebra $\gl_m$ is said to be  {\it powerful}.

Thanks to \cite[Chapter IV, Theorem 1.3.5.1]{Lazard}, one knows that the exponential map $exp: x\mapsto \sum_{n\geq 0}(n!)^{-1} x$ and the logarithm map $log(z):=\sum_{n\geq 1} (-1)^{n+1}n^{-1} (z-1)^n$ converge for $x \in \gl_m$  and  $z\in Gl_m^{1}$, where $Gl_m^1=\{A \in Gl_m(\Z_p), A\equiv  1 \ {\rm mod \ } p^{1+\varepsilon}\}$. Moreover $exp$ and $log$  are reciprocal on these two spaces.
Hence $exp(\gl_m)=Gl_m^{1}$ and 
since $\gl_m$ is powerful, $Gl_m^{1}$ is uniform (\cite[Chapter 5, Theorem 5.2]{DSMN}).

Let $\sl_m$ be the $\Z_p$-Lie subalgebra of $\gl_m$ consisting of  matrices with  zero trace.
The algebra $\sl_m$ is also powerful, and then  $Sl_m^1:=exp(\sl_m)$ is uniform. More, since $\sl_m(\Q_p):=\Q_p \otimes \sl_m$ is simple, one has $\sl_m(\Q_p)=(\sl_m(\Q_p,\sl_m(\Q_p))$ which implies that  the abelianization of $Sl_m(\Z_p)$ is finite.
 Observe that  $exp \circ Trace =det  \circ exp$, confirming that $Sl_m^{1}=exp(\sl_m)$ is also the subgroup of $Gl_m^{1}$ of matrices of determinant~$1$.

\subsubsection{Uniform groups and $\Z_p$-Lie algebras}

For $k\geq 1$, let $\varphi_k$ be the reduction map: $$\varphi_k : Gl_m(\Z_p) \rightarrow Gl_m(\Z/p^k\Z).$$
Set $Gl_{m}^{(k)}=ker(\varphi_{k+\varepsilon})$ and $Sl_{m}^{(k)}=ker(\varphi_{k+\varepsilon})\cap Sl_m(\Z_p)$

\begin{prop}\label{lemm_filtration_sln} 

$(i)$ One has $Gl_m^1=Gl_{m}^{(1)}$ and $Sl_m^1=Sl_{m}^{(1)}$.

$(ii)$ The subgroups $Gl_{m}^{(k)}$ (resp. $Sl_{m}^{(k)}$) correspond to the $p$-descending central series of $Gl_m^{1}$ (resp. $Sl_m^{1}$). In other words, $Gl_m^{(k)}=(Gl_m)_k$ and $Sl_m^{(k)}=(Sl_m)_k$.

$(iii)$  For $k\geq 1$ one has $Gl_{m}^{(k)}=exp(p^{k-1}  \gl_m)$, and $Sl_{m}^{(k)}=exp(p^{k-1}  \sl_m)$.

\end{prop}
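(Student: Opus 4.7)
The plan is to prove the three parts in turn, relying on the explicit description $\gl_m=p^{1+\varepsilon}M_m(\Z_p)$ (where $M_m(\Z_p)$ denotes all $m\times m$ matrices over $\Z_p$), which gives $p^{k-1}\gl_m=p^{k+\varepsilon}M_m(\Z_p)$, together with the identification $Gl_m^{(k)}=1+p^{k+\varepsilon}M_m(\Z_p)$ and the mutual inverse bijection $\exp\leftrightarrow\log$ supplied by Lazard's theorem on the spaces under consideration.

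Part $(i)$ is a direct unwinding of definitions: $Gl_m^{(1)}=\ker(\varphi_{1+\varepsilon})$ is exactly the set of $A\in Gl_m(\Z_p)$ with $A\equiv 1\pmod{p^{1+\varepsilon}}$, namely $Gl_m^{1}$; intersecting with $Sl_m(\Z_p)$ gives the analogous statement for $Sl_m$.

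For $(iii)$, I would first show that $\exp$ carries $p^{k-1}\gl_m=p^{k+\varepsilon}M_m(\Z_p)$ into $Gl_m^{(k)}$. For $x\in p^{k+\varepsilon}M_m(\Z_p)$ and $n\geq 2$, Legendre's bound $v_p(n!)\leq (n-1)/(p-1)$ yields
$$v_p\!\left(x^n/n!\right)\ \geq\ n(k+\varepsilon)-\frac{n-1}{p-1},$$
which strictly exceeds $k+\varepsilon$ once $k\geq 1$ (using $\varepsilon=1$ when $p=2$); hence $\exp(x)\equiv 1+x\pmod{p^{k+\varepsilon+1}}$ and in particular $\exp(x)\in 1+p^{k+\varepsilon}M_m(\Z_p)=Gl_m^{(k)}$. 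Conversely, for $A\in Gl_m^{(k)}$ the series $\log(A)=\sum_{n\geq 1}(-1)^{n+1}(A-1)^n/n$ converges and an identical estimate places it in $p^{k+\varepsilon}M_m(\Z_p)=p^{k-1}\gl_m$; since $\exp\circ\log=\mathrm{id}$ on $Gl_m^{1}$, this proves the equality $\exp(p^{k-1}\gl_m)=Gl_m^{(k)}$. The $Sl_m$ case follows from the identity $\det\circ\exp=\exp\circ\mathrm{tr}$: if $x\in p^{k-1}\sl_m$ then $\mathrm{tr}(x)=0$, forcing $\det(\exp x)=1$ and $\exp(x)\in Sl_m^{(k)}$; conversely, for $A\in Sl_m^{(k)}$ one has $\exp(\mathrm{tr}\log A)=\det A=1$ with $\mathrm{tr}(\log A)\in p^{k+\varepsilon}\Z_p$, and Lazard's injectivity of $\exp$ on $p^{1+\varepsilon}\Z_p$ gives $\mathrm{tr}(\log A)=0$, so $\log A\in p^{k-1}\sl_m$.

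For $(ii)$, the group $Gl_m^{1}=\exp(\gl_m)$ is uniform (as noted in the text), so Theorem \ref{prop_uniform} yields $(Gl_m^{1})_k=(Gl_m^{1})^{p^{k-1}}$. Because $x$ commutes with itself inside $M_m(\Q_p)$, we have $\exp(x)^{p^{k-1}}=\exp(p^{k-1}x)$, and therefore
$$(Gl_m^{1})^{p^{k-1}}\ =\ \exp(p^{k-1}\gl_m)\ =\ Gl_m^{(k)}$$
by $(iii)$. The same computation with $\sl_m$ in place of $\gl_m$ handles $Sl_m^{1}$. The main technical step is the $p$-adic estimate in $(iii)$, delicate only in the $p=2$ case where the factor $\varepsilon=1$ is exactly what is needed to ensure the Lie algebra and group stay in convergent range; once it is established, $(ii)$ becomes essentially formal via Theorem \ref{prop_uniform}.
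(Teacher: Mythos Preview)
Your proof is correct. The paper's own proof is a bare citation to \cite[Chapter~5, Theorem~5.2 and Chapter~4, Lemma~4.14]{DSMN}; your argument simply unpacks those results explicitly---proving $(iii)$ by direct $p$-adic estimates on the $\exp$ and $\log$ series and then deducing $(ii)$ from $(iii)$ via Theorem~\ref{prop_uniform}---so the underlying content is the same.
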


\begin{proof} For $(i)$ and $(ii)$ see  \cite[Chapter 5, Theorem 5.2]{DSMN}; for $(iii)$ see   \cite[Chapter 4, Lemma 4.14]{DSMN}.
\end{proof}

Proposition \ref{lemm_filtration_sln} is a  special case of the following result: 

\begin{theo}\label{theo_correspondence}
 There is a correspondence   between the category of uniform pro-$p$ groups $G$ and the category of powerful $\Z_p$-Lie algebras $\LL$. 
 When $G \subset Gl_m^{1}$ this correspondence is given by the exponential and the logarithm; in particular  $\LL=log(G) \in \gl_m$. 
\end{theo}

\begin{proof}
 See \cite[Chapter 9, Theorem 9.10]{DSMN}.
\end{proof}

\begin{defi}
 Let $G \subset Gl_m^{1}$ be a uniform pro-$p$ group of dimension $d$. Set $\Gg:=log(G) \subset \gl_m$, and $\Gg_p:=\Gg/p\Gg$. Observe that $\Gg_p$ is a $\fq_p$-vector space of dimension $d$.
\end{defi}

As for $Gl_m^1$ in   Proposition \ref{lemm_filtration_sln}, the $p$-descending central series $(G_n)$ of a uniform group $G \subset Gl_m(\Z_p)$  is easy to describe. Indeed:

\begin{prop}
 One has $G_n=exp(p^{n-1}\Gg)$. 
 
 In particular, $G_n/G_{n+1}\simeq p^{n-1}\Gg/p^n \Gg\simeq \Gg_p$.
\end{prop}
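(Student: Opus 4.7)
The plan is to induct on $n$, leveraging the Lie correspondence (Theorem \ref{theo_correspondence}) and the structural results on the $p$-descending central series of uniform groups (Theorem \ref{prop_uniform}). The base case $n=1$ is immediate: by the definition of $\Gg$ we have $G = G_1 = \exp(\Gg)$.

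For the inductive step, suppose $G_n = \exp(p^{n-1}\Gg)$. Theorem \ref{prop_uniform} ensures that $G_n$ is itself uniform; via the Lie correspondence its $\Z_p$-Lie algebra is $p^{n-1}\Gg$. Theorem \ref{prop_uniform}(iii) then gives $G_{n+1} = (G_n)^p$. The key fact I would invoke is the identity
\[
H^p \;=\; \exp\bigl(p\log H\bigr)
\]
for any uniform subgroup $H \subset Gl_m^1$, which is the power-operation side of the Lie correspondence. In the ambient case $H = Gl_m^1$ this is precisely Proposition \ref{lemm_filtration_sln}(iii), and the same proof (from \cite[Chapter 4, Lemma 4.14]{DSMN}) applies verbatim to any uniform subgroup. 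Applied to $H = G_n$ with $\log H = p^{n-1}\Gg$, it yields $G_{n+1} = \exp(p \cdot p^{n-1}\Gg) = \exp(p^n \Gg)$, closing the induction.

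For the quotient, Theorem \ref{prop_uniform}(i) gives $G_{n+1} \supset G_n^p[G_n, G_n]$, so $G_n/G_{n+1}$ is elementary abelian. The log map is a bijection $G_n \isomto p^{n-1}\Gg$ sending $G_{n+1}$ onto $p^n\Gg$, and I would argue it descends to a group homomorphism on the quotient: for $x, y \in p^{n-1}\Gg$ all the terms of the Baker--Campbell--Hausdorff expansion of $\log(\exp(x)\exp(y))$ beyond $x + y$ are nested Lie brackets lying in $(p^{n-1}\Gg, p^{n-1}\Gg) \subset p^{2n-1}\Gg \subset p^n\Gg$, using that $\Gg$ is powerful (so $(\Gg,\Gg) \subset p\Gg$), and hence vanish modulo $p^n\Gg$. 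Thus $\log$ induces an isomorphism $G_n/G_{n+1} \isomto p^{n-1}\Gg/p^n\Gg$, and multiplication by $p^{n-1}$ on $\Gg$ (which is $\Z_p$-free as a submodule of $\gl_m$) supplies the final identification with $\Gg_p$.

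The main obstacle is really the identity $H^p = \exp(p\log H)$, which concentrates the nontrivial input: the BCH denominators and the higher commutator terms must both be absorbed by the powerful-ness of $\Gg$. Once this is granted (as in DSMN), both claims of the proposition follow by routine bookkeeping through the Lie correspondence.
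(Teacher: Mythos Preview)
Your proposal is correct; the paper's own proof is nothing more than the citation ``See \cite[Chapter 4, Lemma 4.14]{DSMN}'', and what you have written is precisely an unpacking of that reference, down to the inductive use of $(G_n)^p=G_{n+1}$ and the BCH argument for the quotient. There is no difference in approach to comment on.
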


\begin{proof}See \cite[Chapter 4, Lemma 4.14]{DSMN}.
\end{proof}

   \subsubsection{The Lie algebra $\Gg$ as a sub-module of $\gl_m$} \label{section_liealgebra_uniform}
 
 Let $G \subset Gl_m^{1}$ be uniform; set $\Gg=log(G)$. Recall that $\Gg$ is the  powerful sub-Lie $\Z_p$-algebra of $ \gl_m$ such that $exp(\Gg)=G$. 
Let $\Delta'$ be a finite  subgroup of $Gl_m(\Z_p)$ of order coprime to $p$, acting 
  by conjugation  on~$G$; observe that  $\Delta'$ also acts on $Gl_m$, on $\gl_{m,p}:=\gl_m/p\gl_m$, and on $\Gg_p$. 
 Since $p\nmid |\Delta'|$, the  $\Z_p[\Delta']$-module  $\gl_m$ is projective    (see \cite[Chapter 14, \S 14.4]{Serre-representation}) and then, $\gl_{m,p}$  and $\gl_m(\Q_p):=\Q_p\otimes \gl_m$ have the 'same' character (as $\Delta'$-modules). 
 Of course, for the same reason, $\Gg_p$ and $\Gg(\Q_p)$ have the same character. Since  $\Gg(\Q_p) \subset \gl_m(\Q_p)$ we obtain:
 
 \begin{prop} \label{prop_compa_ad} Let $\Delta' \subset Gl_m(\Z_p) $ be a subgroup of order  coprime to $p$ acting on~$\Gg$ by conjugation.
  Then $\Gg_p$ is a sub-$\Delta'$-module of $\gl_{m,p}$.
 \end{prop}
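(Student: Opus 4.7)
The plan is to exploit the hypothesis $p \nmid |\Delta'|$ in order to compare the $\fq_p[\Delta']$-module $\Gg_p$ with $\gl_{m,p}$ via their characters, which in turn are controlled by the inclusion $\Gg(\Q_p) \subset \gl_m(\Q_p)$ in characteristic zero.

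First I would confirm the $\Delta'$-equivariance of all the structures at play. Conjugation by any element of $Gl_m(\Z_p)$ preserves the $\Z_p$-lattice $M_m(\Z_p)$ and the congruence subgroup $Gl_m^{1}$, and commutes with the power series defining $\log$ and $\exp$; hence the action of $\Delta'$ on $G$ by conjugation transports through $\log$ to a $\Delta'$-action on $\Gg$ by $\Z_p$-module automorphisms stabilising $\Gg$ inside $\gl_m$. Consequently $\Gg_p = \Gg/p\Gg$ carries a natural $\fq_p[\Delta']$-structure and the inclusion $\Gg \hookrightarrow \gl_m$ is $\Delta'$-equivariant.

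Next comes the central step. Since $|\Delta'|$ is invertible in $\Z_p$, the averaging idempotent $\frac{1}{|\Delta'|}\sum_{\delta \in \Delta'} \delta$ lies in $\Z_p[\Delta']$, and the usual Maschke-style averaging argument shows that any $\Z_p[\Delta']$-module which is free of finite rank over $\Z_p$ is projective. Applied to the lattices $\gl_m$ and $\Gg$, together with the classical bijection between irreducible $\Q_p[\Delta']$-representations and irreducible $\fq_p[\Delta']$-representations valid in the coprime-order case (see \cite[Chapters 14--16]{Serre-representation}), this yields that the multiplicity of any irreducible $\fq_p[\Delta']$-character in $\gl_{m,p}$ (resp.\ in $\Gg_p$) coincides with the multiplicity of the corresponding lifted $\Q_p[\Delta']$-character in $\gl_m(\Q_p)$ (resp.\ in $\Gg(\Q_p)$).

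Finally, since $\Q_p[\Delta']$ is semisimple, the inclusion $\Gg(\Q_p) \subset \gl_m(\Q_p)$ exhibits $\Gg(\Q_p)$ as a direct summand, so $r_\varphi \Gg(\Q_p) \leq r_\varphi \gl_m(\Q_p)$ for every irreducible $\Q_p[\Delta']$-character $\varphi$. Transporting these inequalities through the bijection yields $r_{\bar\varphi} \Gg_p \leq r_{\bar\varphi} \gl_{m,p}$ for every irreducible $\fq_p[\Delta']$-character $\bar\varphi$; by semisimplicity of $\fq_p[\Delta']$ this multiplicity inequality is equivalent to the existence of a $\Delta'$-equivariant embedding of $\Gg_p$ into $\gl_{m,p}$, which is the claim. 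The main (and essentially only) subtlety is administrative: invoking the correct form of the integral-to-modular character-lifting statement for both $\gl_m$ and $\Gg$ simultaneously; everything else reduces to formal manipulation inside semisimple module categories.
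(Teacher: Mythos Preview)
Your argument is correct and follows exactly the same line as the paper's own proof: projectivity of the finitely generated free $\Z_p[\Delta']$-lattices (from $p\nmid|\Delta'|$) forces the characters of $\Gg_p$ and $\gl_{m,p}$ to agree with those of $\Gg(\Q_p)$ and $\gl_m(\Q_p)$, after which the inclusion $\Gg(\Q_p)\subset\gl_m(\Q_p)$ and semisimplicity finish the job. The paper simply states this in two sentences and cites the same reference (Serre, Chapter~14); your version is a more explicit unpacking of the identical idea.
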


 \begin{defi} When the action is given via a Galois representation $\rho_0:\Delta \rightarrow Gl_m(\Z_p)$ (here $\Delta'=\rho_0(\Delta)$),
  the $\Delta$-module $\Gg_p$ is called {\it the adjoint of~$G$ following $\rho_0$}.
 \end{defi}

\subsection{Semisimple  algebras}
The next Theorem, due to Kuranishi (\cite{Kuranishi}), is essential for our strategy.
See also \cite{Bois}. 

\begin{theo}[\cite{Kuranishi}] \label{theo_kuranishi}
 Let $\Ll$ be a semisimple $\Q_p$-Lie algebra. Then $\Ll$ can be  generated by $2$ elements.
\end{theo}

Let $\LL \subset \gl_m$ be a powerful $\Z_p$-Lie algebra.
For $x \in \LL$, put   $w_\LL(x):=max\{k, x \in p^k\LL\}$, $w_\LL(0)=\infty$; it is a valuation on~$\LL$ (following Lazard's terminology, see \cite[Chapter I, \S 2.2]{Lazard}).
When starting with a uniform group $G$, for $g\in G$ define $w_G(g):=w_\Gg(log(g))$, where $\Gg=log(G)$: this is a filtration on~$G$ (see \cite[Chapter II, \S 1]{Lazard}).

\begin{defi}
 Two topological groups $G$ and $G'$ are said to be  {\it locally the same} if they have a common open subgroup.
\end{defi}

As corollary of Theorem \ref{theo_kuranishi} we get

\begin{coro} \label{coro_locallyequal}
 Let $G\subset Gl_m^1$ be a uniform group such that $\Gg(\Q_p)$ is semisimple. Then there  exist two elements
 $g$ and $g'$ in $G$  such that
 \begin{enumerate}
  \item[$(i)$]
$w_G(g)=w_G(g')$,
\item[$(ii)$]  $g \notin \langle g'\rangle   G_{k+1}$, 
\item[$(iii)$]  the group $G$ and  the (closed) subgroup~$G'$ generated by  $g$ and $g'$,  are locally the same.
\end{enumerate}
\end{coro}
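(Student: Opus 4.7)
The plan is to apply Kuranishi's theorem (Theorem~\ref{theo_kuranishi}) at the level of $\Gg(\Q_p)$, descend to a carefully chosen pair in $\Gg$, and read off the conclusion via the correspondence between uniform groups and powerful Lie algebras. First I would invoke Kuranishi to obtain $y_1,y_2\in\Gg(\Q_p)$ generating $\Gg(\Q_p)$ as a $\Q_p$-Lie algebra. Scaling each by a suitable power of $p$ (which does not affect generation over $\Q_p$), I may assume $y_1,y_2\in\Gg$; by multiplying one of them by an extra power of $p$ if necessary, I can arrange $w_\Gg(y_1)=w_\Gg(y_2)=:k$.

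The core step is to upgrade this pair so that the leading terms $\bar y_1,\bar y_2\in p^k\Gg/p^{k+1}\Gg\simeq\Gg_p$ are $\fq_p$-linearly independent; this is the Lie-algebra version of $(ii)$. If they already are, I am done. Otherwise they span a common line $L\subset\Gg_p$; since $\Gg(\Q_p)$ is semisimple and nontrivial, $\dim_{\fq_p}\Gg_p=\dim_{\Q_p}\Gg(\Q_p)\geq 3$, so I pick $z\in\Gg\setminus p\Gg$ with $\bar z\notin L$ and consider the family
\[
 (y_1+p^ksz,\ y_2),\qquad s\in\Q_p.
\]
Generation of $\Gg(\Q_p)$ by two elements is a Zariski-open condition on $\Gg(\Q_p)^2$ (non-vanishing of a polynomial built from enough Lie monomials), so the set of $s\in\Q_p$ for which this family generates is Zariski-open; it contains $s=0$, hence is cofinite in $\Q_p$, hence contains some $s\in\Z_p^\times$. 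For such an $s$, setting $x_1:=y_1+p^ksz$ and $x_2:=y_2$ yields a pair in $\Gg$ generating $\Gg(\Q_p)$, with $w_\Gg(x_1)=w_\Gg(x_2)=k$ and $\bar x_1=\bar y_1+s\bar z\notin L$, so the leading terms become independent in $\Gg_p$.

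Finally, set $g:=\exp(x_1)$ and $g':=\exp(x_2)\in G$. Conditions $(i)$ and $(ii)$ follow directly from the corresponding properties of $x_1,x_2$ via $w_G=w_\Gg\circ\log$ and the isomorphism between the graded piece of $G$'s $p$-descending central series at level $k$ and $\Gg_p$. For $(iii)$, let $G':=\overline{\langle g,g'\rangle}\subseteq G$; as a closed subgroup of $Gl_m(\Z_p)$ it is $p$-adic analytic, and Theorem~\ref{theo_uniform}$(ii)$ provides an open uniform subgroup $H\subseteq G'$. For $N$ large one has $g^{p^N},(g')^{p^N}\in H$, so $p^Nx_1,p^Nx_2\in\log H$, hence $x_1,x_2$ belong to the $\Q_p$-Lie algebra of $H$; by construction this Lie algebra is then all of $\Gg(\Q_p)$, so $\dim H=\dim G$. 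Therefore $H$ is open in both $G'$ and $G$, i.e.\ $G$ and $G'$ are locally the same.

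The main obstacle is the middle step: ``same valuation with linearly independent leading terms'' is a locally closed (not open) condition on $\Gg^2$, so an arbitrary infinitesimal perturbation of Kuranishi's pair does not suffice. The trick is to perturb by an element of valuation \emph{exactly} $k$, and to exploit Zariski-openness of the generation condition to guarantee that only finitely many values of the perturbation parameter can fail.
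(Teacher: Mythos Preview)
Your proof is correct and reaches the same conclusion, but the core step---achieving independent leading terms while preserving generation of $\Gg(\Q_p)$---is handled differently from the paper. The paper never introduces a new element: it repeatedly subtracts from $x$ the multiple of $y$ matching its leading term, replacing $(x,y)$ by $(x-a_0y,\,p^{k_1}y)$, then iterating. Since $(x-ay,y)$ generates $\Ll$ whenever $(x,y)$ does, generation is preserved for free at every step; the process must terminate, because otherwise $x$ would lie in the closed $\Z_p$-line through $y$, forcing $\Ll=\langle y\rangle$ to be abelian. You instead perturb $y_1$ by a fresh element $p^ksz$ chosen to fix the leading term, and invoke Zariski-openness of the generating locus in $\Gg(\Q_p)^2$ (restricted to the line in $s$) to guarantee that some $s\in\Z_p^\times$ still yields a generating pair. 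Your route is more conceptual---one perturbation plus a genericity argument instead of an iterative reduction---but it imports an algebro-geometric fact (that two-generation is an open condition, which follows from bounding the length of Lie monomials needed by $\dim\Ll$); the paper's argument stays entirely within elementary $\Z_p$-linear algebra. For part $(iii)$ the two arguments coincide.
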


\begin{proof}
Let $\Gg:=log(G)$ be the powerful $\Z_p$-Lie algebra associated to $G$, and equipped with   the valuation $\omega_\Gg$. 
Set $\Ll:=\Q_p\otimes \Gg$. 
By Theorem \ref{theo_kuranishi} there exist $x,y \in \Ll$ such that $\Ll=\langle x,y \rangle$. 
By multiplying $x$ and $y$ by some powers of $p$, we can assume that $x$ and $y$ have the same valuation~$k$ (and are also in $\Gg$).
Suppose now that $x\equiv a_0 y \ {\rm mod \ } p^{k+1}\Gg$ for some $a_0 \in\Z_p \backslash p\Z_p$; then $x-a_0 y$ and $p^{k_1}y$  are of the same valuation $k_1+k$ for some $k_1\geq 1$. Suppose moreover that $x - a_0 y \equiv a_1p^{k_1}y \ {\rm mod \ } p^{k+k_1+1} \Gg$ for some $a_1 \in \Z_p \backslash p\Z_p$; then for some $k_2$, the elements $x-a_0y-a_1p^{k_1}y$ and $p^{k_2}y$ are of the same valuation $k_2+k\geq k_1+k+1$. If this process does not stop, we can construct a sequence of integers  $(k_n)$,  $k_{n+1}>k_n$,  and a sequence of $p$-adic integers $(a_n)$ such that $x-a_0y-a_1p^{k_1}y-\cdots -a_np^{k_n} y$ is of valuation $k_{n+1} +k$, showing that $x \in \langle y \rangle$, which is impossible since $\Ll$ is not abelian.
In conclusion, there exists $a_0,\cdots, a_{k_i} \in \Z_p  \backslash p\Z_p$, and integers $k_1,\cdots, k_i$ such that $x':=x-a_0y-\cdots a_{k_{i}}p^{k_{i}}y$ is of valuation $k+{k_{i+1}}$, but such that $ x' \notin  \langle p^{k_{i+1}} y\rangle + p^{k+k_{i+1}+1}\Gg$.

By abuse we note $x$ by $x'$, $p^{k_{i+1}}y$ by $y$, and $k+k_i$
by $k$. 
Thus, we may assume that $x$ and $y$ are in $\Gg$ with the  same valuation $k$, that they generate~$\Ll$, and that $\{ x,y \} $ is free in $p^k \Gg/p^{k+1}\Gg \simeq \Gg_p \simeq (\fq_p)^d$, where $d$ is the dimension of $G$. 

\smallskip

Set $g=exp(x)$ and $g'=exp(y)$. Then by the previous observations one has: $g \notin  \langle g'\rangle   G_{k+1}$. 
Let  $G'=\langle g,g'\rangle$ be the closed subgroup of $G$ generated by $g$ and $g'$.
The pro-$p$ group $G'$ is $p$-adic analytic as closed subgroup of a $p$-adic analytic group; let $U$ be an open uniform subgroup of $G'$. Then for $r\gg 0$, $g^{p^r}$ and $(g')^{p^r}$ are in $U$. Hence the $\Z_p$-Lie algebra $\Ll_U=log(U)$ of $U$ contains $p^rx$ and $p^ry$, and then $\Q_p\otimes \Ll_U=\Ll$. Thus, $U$ and $G$ are locally isomorphic and even locally the same (due to the fact that $U\subset G$),
see for example \cite[Part II, Chapter V, \S 2, Corollary 2]{Serre-Lie}, or \cite[Chapter 9, \S 9.5, Theorem 9.11]{DSMN}.  In other words, $G$ and $G'$ are locally the same.
\end{proof}

The two next examples make explicit Theorem \ref{theo_kuranishi}.

\begin{exem} \label{exemple_sl2}
Take $m=2$. Set $x=E_{1,2}(p)+E_{2,1}(p)$, and $y=E_{1,1}(p)-E_{2,2}(p)$. Observe that $(x,y)=2p\big(E_{2,1}(p)-E_{1,2}(p)\big)$, hence $x$ and $y$ generate the Lie algebra $\sl_2(\Q_p)$.
Set $g=exp(x)$ and $g'=exp(y)$, and $G'=\langle g,g'\rangle$. Then $G'$ has $Sl_{2}^{(2)}$ as open subgroup.
\end{exem}

\begin{exem}[\cite{Detinko-DeGraaf} or \cite{chisto-sln}] \label{example_Sln} Take $m\geq 3$.
 The Lie algebra $\sl_m$ is simple. Set $x= \sum_{i=1}^{m-1} E_{i,i+1}(p)$, and 
 $$y=\left\{ \begin{array}{ll} E_{m,1}(p) & m {\rm \ odd}, \\
E_{m-1,1}(p)+E_{m,2}(p)       & m {\rm \ even}.   
        \end{array}\right.$$
Observe that $\langle x,y\rangle_{\Z_p} \subset \sl_m$.  Thanks to \cite[Proposition 2.5 and Proposition 2.6]{Detinko-DeGraaf} and \cite[Example 2]{chisto-sln} one has $\langle x,y\rangle = \sl_m(\Q_p)$. 
Put $g=exp(x)$, $g'=exp(y)$ and $G'=\langle g,g' \rangle \subset Gl_m^1$. Observe that $w_G(g)=w_G(g')=1$. Then  $G'$ has $Sl_{m}^{(k)}$ as open subgroup for some $k\gg 0$.
\end{exem}

  \section{Lifting in uniform pro-$p$ groups} \label{section_main_embedding}
  To simplify we take $p>2$.
   The goal of this section is to give  lifting criteria for uniform groups including the well-known conditions when $G=Sl_m^1$ of $Gl_m^1$ (see \cite[\S 1.6]{Mazur}).

\subsection{Compatible actions} \label{section_compatible}

  Let $\GG$ be a pro-$p$ group of $p$-rank  $\geq d$, and  
   let $\Delta \subset Aut(\GG)$ be finite of order coprime to $p$.  Set $\Gamma= \GG \ltimes \Delta$.
   
   \medskip
   
   Let $\GG^{p,el}:=\GG/\GG^p[\GG,\GG]$ be the maximal abelian $p$-elementary
    quotient of $\GG$; observe that $\GG^{p,el}$  can be seen as a $\fq_p[\Delta]$-module.

    Let $M$ be a sub-$\fq_p[\Delta]$-module of $\GG^{p,el}$, and let $\rho_0: \Delta \rightarrow Gl_m(\Z_p)$ be a representation of~$\Delta$ such that $ker(\rho_0)$ acts trivially on $M$. Put $\Delta'=\rho_0(\Delta)$.
    Hence $M$ is also a $\Delta'$-module by
    $\rho_0(s)\cdot m:=s\cdot m$.
   
   \medskip
   
   Let $Pr_M: \GG \rightarrow \GG^{p,el} \rightarrow M$ be the projection of $\GG$ on $M$.
   
\medskip

Let $G'\subset Gl_m(\Z_p)$ be a pro-$p$ group such that $d_p G'=d_p M$.
  Suppose that $\rho_0(\Delta)$ acts on $G'$ by conjugation. Hence 
  $(G')^{p,el}$ becomes a $\Delta$-module via  $\rho_0$, by $s\cdot g':=\rho_0(s)\cdot g'$.
  We suppose now that the action of $\Delta$ on $M$ is compatible with that of $\Delta$ on $(G')^{p,el}$: in other words, 
  $\chi((G')^{p,el}) = \chi(M)$, as $\Delta$-module. Hence there exists one $\Delta$-isomorphism $ \beta: (G')^{p,el} \isomto M$ (which is equivalent to be  an isomorphism of $\Delta'$-modules).

  \subsection{Embedding problem} \label{section_embedding}

Let $G \subset Gl_m^1$ be a uniform pro-$p$ group of dimension~$d$. Set $\Gg:=log(G) \subset \gl_m$.
Given $1 \leq s\leq d$ and $k\geq 0$,  let $z_1,\cdots, z_s \in p^k\Gg$ be some free  elements  in  $p^{k}\Gg/p^{k+1}\Gg \simeq (\Z/p)^d$. 
Set $g_i=exp(z_i)$. Then for $i=1, \cdots, k$, one has $w_G(g_i)=k$.

\smallskip

Let us consider the closed subgroup $G'$ of $G$ generated by the $g_i$'s.
The group $G'$ is $p$-adic analytic. Observe that $G' \subset G_k \subset Gl_{m}^{(k)}=ker(Gl_m(\Z_p)\rightarrow Gl_m(\Z/p^k))$.
  Recall that $(G_n)$ is the $p$-central descending series of $G$.

  \medskip
  
  For $n\geq 1$, put $G'_{[n]}:=G'\cap G_{n+k-1}$. Hence $G'_{[1]}=G'$.
  
  \begin{lemm} \label{lemm_decalage}
$(i)$    The pro-$p$ group $G'$ is of $p$-rank $s$,  and  $(G')^{p,el} \simeq G'/G'_{[2]}$.

  $(ii)$ 
   For each $n\geq 1$, $G_{[n]}' \lhd G'$,   the quotient $G_{[n]}'/G_{[n+1]}'$ is $p$-elementary abelian, and  $G'$ acts trivially (by  conjugation) on $G_{[n]}'/G_{[n+1]}'$.

$(iii)$ 
  The  $G'_{[n]}$ are open in $G'$, and $\displaystyle{\bigcap_n G'_{[n]}=\{1\}}$. 
  \end{lemm}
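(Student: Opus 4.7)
My plan is to move everything into the uniform subgroup $G_{k+1}=\exp(p^k\Gg)$, which contains $G'$ (since each $z_i\in p^k\Gg$ gives $g_i=\exp(z_i)\in G_{k+1}$), and to exploit the $\fq_p$-linear identification $G_{k+n}/G_{k+n+1}\isomto p^{k+n-1}\Gg/p^{k+n}\Gg\simeq\Gg_p$ provided by $\log$. For $n=1$, the image of $g_i$ in $G_{k+1}/G_{k+2}$ is the class of $z_i$ in $p^k\Gg/p^{k+1}\Gg$, which by hypothesis forms an $\fq_p$-free family of size $s$. Since the $p$-descending central series of the uniform group $G_{k+1}$ (cf.\ Theorem \ref{prop_uniform}) is $(G_{k+1})_n=G_{k+n}$, I view $G'_{[n]}$ as the intersection $G'\cap(G_{k+1})_n$.

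For (i), I would bound $d_pG'$ on both sides. The inequality $d_pG'\leq s$ is immediate since $G'$ is topologically generated by $s$ elements. Conversely, the composition $G'\to(G')^{p,\mathrm{el}}\to G_{k+1}/G_{k+2}$ sends $g_1,\dots,g_s$ to an $\fq_p$-free family, forcing $d_pG'\geq s$. The induced map $(G')^{p,\mathrm{el}}\to G_{k+1}/G_{k+2}$ is then an injection of $\fq_p$-vector spaces of the same dimension~$s$; its kernel $(G')^p[G',G']$ must therefore coincide with $G'\cap G_{k+2}=G'_{[2]}$, producing the isomorphism $(G')^{p,\mathrm{el}}\simeq G'/G'_{[2]}$.

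For (ii), normality of each $G'_{[n]}$ in $G'$ is inherited from the normality of $G_{k+n}$ in $G$. The natural inclusion $G'_{[n]}/G'_{[n+1]}\hookrightarrow G_{k+n}/G_{k+n+1}\simeq\Gg_p$ forces each successive quotient to be $p$-elementary abelian. For the trivial conjugation action of $G'$, I would invoke the standard commutator bound for uniform groups, which (applied to the uniform group $G_{k+1}$ with its own central series) gives $[G_{k+1},G_{k+n}]\subset G_{k+n+1}$; intersecting with $G'$ yields $[G',G'_{[n]}]\subset G'_{[n+1]}$. Part (iii) is then immediate: each $G_{k+n}$ is open of finite index in $G$, so each $G'_{[n]}$ is open in $G'$, and $\bigcap_n G'_{[n]}\subset\bigcap_n G_{k+n}=\{1\}$ because the $p$-descending central series of the finitely generated pro-$p$ group $G$ is a fundamental system of open neighbourhoods of the identity.

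The main technical point is the identification $(G')^p[G',G']=G'_{[2]}$ in part (i): the inclusion $\subset$ is formal, but the reverse is where the freeness of the $z_i$ in $p^k\Gg/p^{k+1}\Gg$ enters crucially, forcing $(G')^{p,\mathrm{el}}\to G_{k+1}/G_{k+2}$ to be injective on an $s$-dimensional domain hitting an $s$-dimensional image. Once this dimensional match is in hand, every other assertion reduces to intersection with a term of the $p$-descending central series of $G_{k+1}$.
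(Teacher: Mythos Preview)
Your argument is correct and follows the same route as the paper's proof: embed each successive quotient $G'_{[n]}/G'_{[n+1]}$ into the corresponding layer of the $p$-descending central series of the ambient uniform group, use the $\log$-identification with $\Gg_p$ and the freeness of the $z_i$'s to pin down $d_pG'=s$ and the isomorphism $(G')^{p,\mathrm{el}}\simeq G'/G'_{[2]}$, and read off normality, $p$-elementarity, and the trivial $G'$-action from $[G,G_n]\subset G_{n+1}$. Your choice to work throughout in the uniform subgroup $G_{k+1}=\exp(p^k\Gg)$ (so that $(G_{k+1})_n=G_{k+n}$ by Theorem~\ref{prop_uniform}) is a clean normalization of the indices; it amounts to the same computation as in the paper, only with the bookkeeping shifted by one, and indeed makes the diagram in part~$(i)$ match the hypothesis that the $z_i$ are free in $p^k\Gg/p^{k+1}\Gg$.
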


  \begin{proof}
   $(i)$ One has the commutative diagram: 
   $$\xymatrix{G'/G'_{[2]} \ar@{^{(}->}[r]&G_k/G_{k+1} \ar@{->}^{\simeq}_{log}[r] &p^k\Gg/p^{k+1}\Gg \\
   & \ar@{->>}[lu]^-{P} G'/(G')^p[G',G']\ar@{->}_{log}[ru] & }$$
   Hence the family $\{g_1G'_{[2]},\cdots, g_sG'_{[2]}\}$ is free in $G'/G'_{[2]}$, showing that $d_p G' \geq d_p G'/G'_{[2]} \geq s$. But $G'$ is generated by the $g_i$'s. Thus $d_p G'=s$, and $P$ is an isomorphism.

   $(ii)$  Clearly $G_{[n]}' \lhd G'$.
   Since $G_{n+1}=G^p_n[G,G_n]$ one has:
   $$\begin{array}{rcl}
   G_{[n]}'/G_{[n+1]}'&=& G'\cap G_n/G'\cap G_{n+1} \\
   &=& \big(G'\cap G_n\big) G_n^p [G,G_n]\big/ G_n^p [G,G_n].
   \end{array}
   $$
   Hence $G_{[n]}'/G_{[n+1]}'$ is abelian, and  $G$ and then $G'$ acts trivially on $G_{[n]}'/G_{[n+1]}'$.

   $(iii)$ Point $(ii)$ shows that the $G_{[n]}$ are of finite index in $G'$, and then open since~$G'$ is pro-$p$  finitely generated. Regarding the intersection, that is obvious since $\displaystyle{\bigcap_n G_{n}=\{1\}}$. 
  \end{proof}

  We now include conditions  of Section \ref{section_compatible}.
  
 Via $\beta$ and $\rho_0$, suppose that $(G')^{p,el}$ can be seen as a sub-$\Delta$-module of $\GG^{p,el}$; or equivalently, $(G')^{p,el}$ is $\Delta'$-isomorphic to a subspace $M$ of $\GG^{p,el}$.

  Hence there exists a surjective morphism $f_2: \Gamma  \rightarrow G'/G_{[2]}'\ltimes \Delta'$, such that
  \begin{enumerate}
      \item[$(i)$] $(f_2)_{|\GG}=\beta^{-1}\circ Pr_M$,
\item[$(ii)$] $(f_2)_{|\Delta}=  \rho_0$. 
\end{enumerate}
Recall that $G'/G_{[2]}'=(G')^{p,el}$.
  
  \medskip
  
  More generally, suppose that for some  $n\geq 2$, there exists a
  surjective morphism $f_n : \Gamma  \rightarrow G'/G_{[n]}' \ltimes \Delta'$, where $(f_n)_{|\Delta}= \rho_0$.
 Then   let us consider  the embedding problem $(\E_{n})$:
  
  $$\xymatrix{ & & & \Gamma=\GG \ltimes \Delta \ar@{.>}[ld]_-{\psi_n} \ar@{->>}[d]^{f_n} \\
1 \ar[r] &G'_{[n]}/G'_{[n+1]}  \ar[r] & G'/G_{[n+1]}' \ltimes \Delta' \ar@{->>}[r]_{g_n} &  G'/G_{[n]}' \ltimes \Delta'  }$$
where   ${g_n}$ is the natural map (in particular ${g_n}_{|\Delta'}$ is the identity). 

 Thanks to the criteria of Hoechsmann (see for example \cite[Chapter III, \S 5]{NSW}),   $(\E_{n})$ has some solution when $H^2(\Gamma,G'_{[n]}/G'_{[n+1]})=0$, where the action of $\Gamma$ on $G'_{[n]}/G'_{[n+1]}$ is induced by conjugation via $f_n$. See for example \cite[Chapter III, \S 5, Proposition 3.5.9]{NSW}.
 In fact we need more:
 
 \begin{prop}\label{prop_strong}
  If $(\E_n)$ has a solution $\psi_n$, then $\psi_n$ is an epimorphism (the solution is called proper). 
 \end{prop}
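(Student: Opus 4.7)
The plan is to execute a Frattini-type argument. Set $N := G'_{[n]}/G'_{[n+1]}$ for the kernel of $g_n$ and $H := \psi_n(\Gamma)$ for the image of the candidate solution. Because $g_n \circ \psi_n = f_n$ is surjective by hypothesis, we immediately obtain $H \cdot N = G'/G'_{[n+1]} \ltimes \Delta'$; properness of $\psi_n$ thus amounts to absorbing the factor $N$ into $H$.

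First I would peel off the $\Delta'$-part from the pro-$p$ part. Since $|\Delta'|$ is coprime to $p$ and $G'/G'_{[n+1]}$ is a finite $p$-group (using that $G'_{[n+1]}$ is open in $G'$ by Lemma \ref{lemm_decalage}$(iii)$), the subgroup $G'/G'_{[n+1]}$ is the unique Sylow $p$-subgroup of the target. Put $H_p := H \cap (G'/G'_{[n+1]})$; this is the Sylow $p$-subgroup of $H$. Because $f_n$ restricted to $\Delta$ is $\rho_0$ and already surjects onto $\Delta'$, so does $\psi_n$, and it suffices to prove $H_p = G'/G'_{[n+1]}$. Intersecting the equality $HN = G'/G'_{[n+1]} \ltimes \Delta'$ with the kernel of the projection to $\Delta'$ (noting $N \subseteq G'/G'_{[n+1]}$) yields $H_p \cdot N = G'/G'_{[n+1]}$.

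The decisive step is to observe that $N$ lies inside the Frattini subgroup of $G'/G'_{[n+1]}$. By Lemma \ref{lemm_decalage}$(i)$, the canonical map $G'/(G')^p[G',G'] \to G'/G'_{[2]}$ is an isomorphism, so $\Phi(G') = G'_{[2]}$ and therefore $\Phi(G'/G'_{[n+1]}) = G'_{[2]}/G'_{[n+1]}$. Since $n \geq 2$, one has $G'_{[n]} \subseteq G'_{[2]}$, i.e.\ $N \subseteq \Phi(G'/G'_{[n+1]})$. The classical Frattini argument for finite $p$-groups then promotes $H_p \cdot N = G'/G'_{[n+1]}$ to $H_p = G'/G'_{[n+1]}$, and combined with the surjectivity onto $\Delta'$ this gives $H = G'/G'_{[n+1]} \ltimes \Delta'$, proving that $\psi_n$ is an epimorphism.

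The only genuinely nontrivial input is the Frattini identification $\Phi(G') = G'_{[2]}$, but this is already the content of Lemma \ref{lemm_decalage}$(i)$; everything else is formal finite group theory, exploiting that the kernel of $g_n$ lives deep enough inside the pro-$p$ part of the target. Note that $G'$ itself need not be uniform here, which is fine because the whole argument takes place inside the finite quotient $G'/G'_{[n+1]}$.
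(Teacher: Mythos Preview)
Your proof is correct and follows essentially the same approach as the paper's: both reduce to the Frattini-type observation that the kernel $G'_{[n]}/G'_{[n+1]}$ of $g_n$ lies inside $\Phi(G'/G'_{[n+1]})=G'_{[2]}/G'_{[n+1]}$, which is exactly the content of Lemma~\ref{lemm_decalage}$(i)$. The paper's proof compresses this into a single sentence (``same minimal number of generators'') and does not spell out the reduction from the full target $G'/G'_{[n+1]}\ltimes\Delta'$ to its Sylow $p$-subgroup; your version makes both of these steps explicit and is in that sense more complete.
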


 \begin{proof}
  The question is to see if the map $\psi_n$ is surjective. Since $G'/G'_{[n+1]}$ and $G'/G'_{[n]}$ are $p$-groups, it is equivalent to see if these two groups have the same minimal number of generators: that is Lemma \ref{lemm_decalage}, $(i)$.
 \end{proof}

 \subsection{Main Theorem}
 
 We can now announce the key theoretical result of our paper.
 Let us write $\GG^{ab} \simeq \T \oplus \Z_p^t$, where $\T$ is the torsion part of $\GG^{ab}$.
 Let us keep the notations of the previous sections. In particular $G$ is a uniform group of dimension $d$, $G'$ is a closed  subgroup of $G$, $\beta$ is a $\Delta$-isomorphism from $(G')^{p,el}$ to a sub-$\Delta$-module of $\GG^{p,el}$, $\rho_0:\Delta \rightarrow Gl_m(\Z_p)$ is a representation of $\Delta$, and  $\Delta'=\rho_0(\Delta)$.
 We suppose moreover that $\Delta'$ acts  by conjugation on $G$.  Hence, via $\rho_0$, the group $\Delta$ acts also on $\Gg:=log(G)\subset \gl_n$, and on $\Gg_p:=\Gg/p\Gg$ (see \S \ref{section_liealgebra_uniform}).

 \begin{theo} \label{theo_main} With the above notations, suppose given $f : \Gamma=\GG \ltimes \Delta \twoheadrightarrow G'/(G')^p[G',G']\ltimes \Delta'$ where $f_{|\Delta}= \rho_0$, such that: 
$(i)$ $H^2(\GG,\Q_p/\Z_p)=0$; and $(ii)$ $\T[p] \perp \Gg_p$.
 Then the embedding problem
   $$\xymatrix{   & \Gamma=\GG \ltimes \Delta \ar@{.>>}[ld]_{\psi} \ar@{->>}[d]^-{f} \\
  G'\ltimes \Delta'  \ar@{->>}[r]_-g &  G'/G'_{[2]}  \ltimes \Delta' }$$
 has a (proper) continuous solution $\psi$.
 \end{theo}

 \begin{proof} It is a proof step by step. 
 
 $\bullet$ First, for $n\geq 2$ suppose given  a
  surjective morphism $f_n : \Gamma  \rightarrow G'/G_{[n]}' \ltimes \Delta'$, where $(f_n)_{|\Delta}= \rho_0$. And consider the embedding problem $(\E_n)$.

 $\bullet$ Observe now that 
 $$
 \xymatrix{G'_{[n]}/G'_{[n+1]} \ar@{=}[r]& G'\cap G_n\big/G'\cap G_{n+1} \ar@{->}^-\sim[r] &  (G'\cap G_n)G_{n+1}/ G_{n+1}  \ar@{^(->}[d] \\
  & G_n/G_{n+1} &  G_n G_{n+1}/G_{n+1} \ar@{->}^-\sim[l] }$$
  Since $G$ is uniform, $G_n/G_{n+1} \simeq \Gg_p$, and this isomorphism is also compatible with the action of $\Delta$.
  In particular, $G'_{[n]}/G'_{[n+1]}$ is a sub-$\Delta$-module of $\Gg_p$.

 $\bullet$ Since $f_n(\GG)\subset G'/G'_{[n]}$, by Lemma \ref{lemm_decalage}  the group $\GG$ acts trivially (via $f_n$) on $ G_{[n]}'/G_{[n+1]}'$.
  By Theorem \ref{theo_H2_semisimple} we get  
    $$H^2(\Gamma, G_{[n]}'/G_{[n+1]}') \simeq \Big(\T[p]^\wedge\otimes  G_{[n]}'/G_{[n+1]}' \Big)^\Delta.$$
  
  $\bullet$ But by hypothesis 
$ \T[p] \perp \Gg_p$. Then as $G_{[n]}'/G_{[n+1]}' \hookrightarrow \Gg_p$, one has 
$ \T[p] \perp G_{[n]}'/G_{[n+1]}'$.
  By Lemma \ref{lemm_character_trivial}  we finally get  $H^2(\Gamma,G_{[n]}'/G_{[n+1]}')=0$:
  the embedding problem $(\E_n)$ has some proper solution $\psi_n$ thanks to  Proposition~\ref{prop_strong}.
  
  Put $f_{n+1}:=\psi_n$. 
  
  $\bullet$ By hypothesis $f_2$ is given. Hence by the previous computation one deduces that $(\E_2)$ has a proper solution, which  gives the existence of one $f_3$. Then $(\E_3)$ has a proper solution, etc.  
 To conclude, it suffices to take the projective limit of a system of compatible solutions $\psi_n$, and to remember  that $\displaystyle{\bigcap_n G'_{[n]}=\{1\}}$.
 \end{proof}

\begin{rema}\label{rema_lift}
Observe that $G'\ltimes \Delta' \hookrightarrow G_m(\Z_p)$. Hence the continuous map~$\psi$ induces a continuous Galois  representation $\rho : \Gamma \rightarrow Gl_m(\Z_p)$ with image containing $G'$ as open subgroup. Moreover for $\delta \in \Delta$, one has $\psi(\delta)=\rho_0(\delta)$; thus $\rho_{|\Delta} \simeq \rho_0$. In other words, $\rho$ is a lift of~$\rho_0$.
Finally observe that changing the map $\beta$ (which is possible since $p>2$), changes the representation $\rho$. 
 \end{rema}


\section{Applications}

Before developing the arithmetical context, let us make a quick observation. 

\begin{prop} Let $k$ be a number field such that $r_2 >0$.
 Suppose  Leopoldt and Gras conjectures for $k$ at $p$. Take $p\gg 0$. Then for every $p$-analytic group $G$ for which the Lie algebra  is semisimple, there exist continuous  Galois representations $\rho : Gal(\overline{k}/k) \rightarrow Gl_m(\Z_p)$ with image locally the same as~$G$.
\end{prop}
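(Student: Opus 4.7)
The strategy is to combine the freeness of $G_{k,p}$ (guaranteed by the Leopoldt and Gras conjectures for $p\gg 0$) with Kuranishi's Theorem applied to an open uniform subgroup of $G$, and then invoke Theorem \ref{theo_main}.

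\textbf{Step 1 (the arithmetic side).} For $p\gg 0$, the Gras conjecture forces $\T_{k,p}=1$, while Leopoldt's conjecture gives $ker(\iota_{k,p})=1$. By Proposition \ref{prop_free}, $\GG:=G_{k,p}$ is free pro-$p$ of rank $r_2+1\geq 2$. In particular, $H^2(\GG,\Q_p/\Z_p)=0$ and $\GG^{ab}\simeq \Z_p^{r_2+1}$, so $\T=0$.

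\textbf{Step 2 (the group-theoretic side).} Since $G$ is $p$-adic analytic, Theorem \ref{theo_uniform} lets me replace $G$ by an open uniform subgroup, and then by a further open subgroup if needed, so that I may assume $G\subset Gl_m^1$ for some $m$ (all these subgroups are locally the same as the original $G$). The Lie algebra $\Gg(\Q_p)=\Q_p\otimes \log(G)$ is semisimple by hypothesis, so Corollary \ref{coro_locallyequal} yields two elements $g,g'\in G$ of equal valuation $k$ such that the closed subgroup $G':=\overline{\langle g,g'\rangle}$ is locally the same as $G$ and is of $p$-rank $2$ (by Lemma \ref{lemm_decalage}(i)).

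\textbf{Step 3 (setting up the embedding problem).} Take $\Delta$ trivial, so $\Gamma=\GG$, $\Delta'=1$, and $\rho_0$ is the trivial representation. Since $\GG^{p,el}\simeq (\fq_p)^{r_2+1}$ has dimension $\geq 2$ and $(G')^{p,el}\simeq (\fq_p)^2$, I can choose any surjective $\fq_p$-linear map $f:\GG^{p,el}\twoheadrightarrow (G')^{p,el}$; composing with the natural projection gives $f:\GG\twoheadrightarrow G'/(G')^p[G',G']$. The hypotheses of Theorem \ref{theo_main} hold: condition $(i)$ is immediate from freeness, and condition $(ii)$ holds trivially because $\T=0$.

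\textbf{Step 4 (conclusion).} Theorem \ref{theo_main} produces a proper (surjective) continuous solution $\psi:\GG\twoheadrightarrow G'$. Composing with the natural projection $Gal(\overline{k}/k)\twoheadrightarrow G_{k,p}=\GG$ and the inclusion $G'\hookrightarrow Gl_m(\Z_p)$ yields a continuous representation $\rho:Gal(\overline{k}/k)\rightarrow Gl_m(\Z_p)$ of image $G'$, which is locally the same as $G$ by construction.

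\textbf{Main obstacle.} There is no deep obstacle once all the pieces are in place: the freeness of $\GG$ trivialises both hypotheses of Theorem \ref{theo_main}, and Kuranishi's theorem (via Corollary \ref{coro_locallyequal}) supplies the two generators with the correct valuation data. The only point requiring some care is verifying that the uniform open subgroup of $G$ one works with can be taken inside $Gl_m^1$ so that the exponential/logarithm correspondence of Theorem \ref{theo_correspondence} applies; this is harmless as it affects only the ``locally the same as'' equivalence class, which is exactly what the statement asks for.
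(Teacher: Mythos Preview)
Your proof is correct and follows the same overall line as the paper: establish that $G_{k,p}$ is free pro-$p$ of rank $r_2+1\geq 2$, pass to an open uniform subgroup of $G$ inside $Gl_m^1$, and apply Corollary \ref{coro_locallyequal} to obtain a $2$-generated $G'$ locally the same as $G$. The one difference is that you route the final surjection $G_{k,p}\twoheadrightarrow G'$ through Theorem \ref{theo_main} with trivial $\Delta$, whereas the paper simply observes that a free pro-$p$ group of rank $\geq 2$ surjects onto any $2$-generated pro-$p$ group by its universal property; your detour is harmless but unnecessary, since with $\T=0$ the embedding-problem machinery collapses to exactly that universal property.
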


\begin{proof} Here we assume that  the pro-$p$ group $G_{k,p}$ is free of $p$-rank $r_2+1$.
Let $U \subset G$ be a uniform subgroup of $G$. The group $U$ is pro-$p$. We can assume that $U\subset Gl_m^1$,
 and we conclude with  Corollary \ref{coro_locallyequal} (as consequence of Theorem \ref{theo_kuranishi}).
\end{proof}

When $k$ is totally real,  one strategy is to start with a residual Galois representation of $Gal(\overline{k}/k)$ of order coprime to $p$ (typically of order $2$) in which at least one real place is ramified.

 \subsection{The principle}\label{section_Galois_lifting}
 
 We apply Section \ref{section_liealgebra_uniform} in our favorite arithmetical context 
as  developed by Greenberg \cite{Greenberg}, Ray \cite{Ray}, etc.

$\bullet$ Let us start with a Galois extension $K/k$ of Galois group $\Delta$ of order coprime to $p$.
Recall that $\Delta$ acts on $G_{K,p}$, etc.
Set $\Gamma=Gal(K_p/K) \simeq G_{K,p} \ltimes \Delta$.

Suppose  $ker(\iota_{K,p})$ trivial (equivalently, assume Leopoldt's conjecture for $K$ at $p$). Then  $H^2(G_{K,p},\Q_p/\Z_p)=0$ by Proposition \ref{prop_injectivity_iota}.

 $\bullet$ Let  $\rho_0 : \Delta \rightarrow Gl_m(\Z_p)$ be a Galois representation of $Gal(K/k)$.

For $i=1,\cdots,s$, let $L_i/K$ be cyclic degree $p$ extensions in $K_p/K$. Let $L$ be the compositum of the $L_i$'s and set $M=Gal(L/K)$.  We suppose that $\Delta$  acts on $M$  but also  that $ker(\rho_0)$ acts trivially on $M$ as in Section \ref{section_compatible}. Hence $\Delta':=\rho_0(\Delta)$ acts on $M$ by $\rho_0(s)\cdot m:= s \cdot m$.

$\bullet$ Let $G\subset Gl_m^1$ be a uniform  group, and let $G'$ be an open subgroup of $G$  as in Section \ref{section_embedding}.
Recall that $G'=\langle g_1,\cdots, g_s\rangle$ where the $g_i$'s are in $ G_{k}\backslash G_{k+1}$. In particular $G' \subset G_k$.
Observe  that $G_{k+1}=G^{p^{k+1}}$ by Theorem \ref{theo_uniform}.
Write $G/p^{k+1}:=(G \ {\rm mod \ } G^{p^{k+1}})$.

We suppose now that  $\rho_0(\Delta)$ acts by conjugation on $G'$, such that there exists a $\Delta$-isomorphism $\beta : (G')^{p,el} \rightarrow M$ (which is equivalent to say that is a $\Delta'$-isomorphism).

Hence, we also get $Gal(L/K) \ltimes \rho_0(\Delta) \simeq (G')^{p,el} \ltimes \Delta'$.
 By Lemma \ref{lemm_decalage} recall that $$(G')^{p,el}\simeq G'/(G')^p[G',G']\simeq G'/G'_{[2]} \simeq G'G_{k+1}/G_{k+1}.$$

  We then have a continuous Galois representation $$\rho_1: Gal(K/k) \rightarrow G/p^{k+1}\ltimes \Delta'$$ such that:
 \begin{enumerate}
  \item[$(i)$]  $(\rho_1)_{|Gal(K_p/K)}=\beta^{-1}\circ Pr_M$, 
  \item[$(ii)$]  ${\rho_1}_{|_{Gal(K/k)}}=\rho_0$, 
  \item[$(iii)$]$\rho_1 \ {\rm mod \ } G^{p^k}  \simeq \rho_0$.
 \end{enumerate}
 
 The  Galois representation $\rho_1$ plays the role of the  function $f$ of Theorem~\ref{theo_main}.

$\bullet$ As $\Delta'$ (or $\Delta$)  acts by conjugation on $G'$, we assume moreover that it also acts on $G$.
Set $\Gg:=log(G) \subset \gl_n$.   Hence $\Gg_p$ becomes a $\Delta$-module (via $\rho_0$).

\smallskip

 As consequence of Theorem \ref{theo_main} and Remark \ref{rema_lift}, we get:
 
 \begin{coro} \label{coro_sln}
  If $\ker(\iota_{K,p})=1$ and  $\T_{K,p}[p] \perp \Gg_p$, then the representation $\rho_0$ lifts to a Galois representation $\rho: Gal(K_p/k) \rightarrow Gl_m(\Z_p)$
  with image containing $G'$ as open subgroup.
 \end{coro}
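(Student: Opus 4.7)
The plan is to apply Theorem~\ref{theo_main} directly, taking as the abstract pro-$p$ group $\GG$ the arithmetic group $G_{K,p}$ and as $\Gamma$ the semidirect product $G_{K,p}\ltimes \Delta \simeq Gal(K_p/k)$ (the splitting being afforded by Schur--Zassenhaus, since $\gcd(|\Delta|,p)=1$). With this identification, the torsion piece $\T$ in the abelianization $\GG^{ab}$ becomes $\T_{K,p}$, the $\Delta$-module structures all come from the natural Galois action, and the ambient representation $\rho_0$ and uniform group $G \subset Gl_m^1$ are exactly those specified in Section~\ref{section_Galois_lifting}.

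The verification proceeds as follows. First, the hypothesis $\ker(\iota_{K,p})=1$ combined with Proposition~\ref{prop_injectivity_iota} yields $H_2(G_{K,p},\Z_p)=1$, hence by duality $H^2(G_{K,p},\Q_p/\Z_p)=0$; this is condition~$(i)$ of Theorem~\ref{theo_main}. Second, the orthogonality hypothesis $\T_{K,p}[p]\perp \Gg_p$ is literally condition~$(ii)$. It remains to produce the map $f$ of Theorem~\ref{theo_main}: this is precisely the continuous surjection $\rho_1:\Gamma \to G'/(G')^p[G',G']\ltimes \Delta'$ constructed in Section~\ref{section_Galois_lifting}, whose restriction to $\Delta$ is $\rho_0$ and whose restriction to $G_{K,p}$ is the composition $\beta^{-1}\circ Pr_M$ using the $\Delta$-isomorphism $\beta:(G')^{p,el}\isomto M \hookrightarrow G_{K,p}^{p,el}$.

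With all hypotheses in place, Theorem~\ref{theo_main} supplies a continuous proper solution $\psi:\Gamma \twoheadrightarrow G'\ltimes \Delta'$. Composing $\psi$ with the inclusion $G'\ltimes \Delta' \hookrightarrow Gl_m(\Z_p)$ (valid because $G'\subset Gl_m^1$ and $\Delta'\subset Gl_m(\Z_p)$ via $\rho_0$) furnishes the required continuous Galois representation $\rho:Gal(K_p/k)\to Gl_m(\Z_p)$, as explained in Remark~\ref{rema_lift}. Since $\psi$ is surjective onto $G'\ltimes \Delta'$, the image of $\rho$ contains $G'$; and since $G'$ is open in $G$ (by Example~\ref{example_Sln}/Corollary~\ref{coro_locallyequal} in the intended applications), it is open in the image of $\rho$. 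The identity $\rho_{|\Delta} \simeq \rho_0$ is immediate from condition $(ii)$ on $f_2$ in Section~\ref{section_embedding}.

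There is no genuine obstacle here: the work has already been absorbed into Theorem~\ref{theo_main}, and the corollary is a translation of that abstract lifting result into the Galois-theoretic language. The only point deserving care is ensuring that the compatibility data (the isomorphism $\beta$ between $(G')^{p,el}$ and the subspace $M\subset G_{K,p}^{p,el}$ corresponding to $Gal(L/K)$, and the conjugation action of $\Delta'$ on $G$) are precisely what Theorem~\ref{theo_main} requires; this is guaranteed by the construction of Section~\ref{section_Galois_lifting}.
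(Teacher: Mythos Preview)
Your proof is correct and follows exactly the paper's approach: the corollary is stated there simply ``as consequence of Theorem~\ref{theo_main} and Remark~\ref{rema_lift}'', and you have merely unpacked that attribution. One small remark: the reason $G'$ is open in the image of $\rho$ is not that $G'$ is open in $G$, but rather that the image of $\rho$ is exactly $G'\ltimes \Delta'$ (since $\psi$ is surjective) and $\Delta'$ is finite.
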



  \subsection{Galois representations via imaginary quadratic fields} \label{section_imaginary}
  
  We start with an imaginary quadratic extension $K/\Q$. Let $p>2$ be a prime number. 
  Put $\Delta=Gal(K/\Q)=\langle s \rangle$, and let $\varphi$ be the nontrivial character of  $\Delta$.
  
  \smallskip
  
  $\bullet$ Suppose that $p\nmid |Cl_K|$. For $p=3$, we assume moreover that $\U_p/\iota_{K,p}(E_K)$ is torsion free; typically $K=\Q(\sqrt{-3})$.
The pro-$p$ group $G_{K,p}$ is free (see Example \ref{exemple_quad_imaginary}), and 
  $\chi(G_{K,p}^{ab}/p)=\1+\varphi$ by Proposition \ref{prop_character_Gp}. 
  Take $M=G_{K,p}^{p,el} =\langle h_1, h_2 \rangle \simeq (\Z/p)^2$, such that $s\cdot h_1=h_1$ and $s\cdot h_2=h_2^{-1}$.

  \smallskip
  
  $\bullet$ We recall observation of Example  \ref{example_Sln} from \cite{chisto-sln} and \cite{Detinko-DeGraaf}.  
  Take $m\geq 3$, and 
  consider $z_1=E_{1,2}(p)+E_{2,3}(p)+\cdots+E_{m-1,m}(p) \in \gl_m$, and 
  $$z_2=\left\{\begin{array}{ll} E_{m,1}(p) & m {\rm \ odd} \\ 
   E_{m-1,1}(p) +E_{m,2}(p) & m {\rm \ even}.
   \end{array}\right.$$
   Set $g_1=exp(z_1) \in Gl_m^1$ and $g_2=exp(z_2) \in Gl_m^1$, and $G'=\langle g_1,g_2 \rangle$. 
   Take the uniform group $G:=Sl_m^1$. Of course $G' \subset G$.
As seen in \ref{example_Sln} (thanks to Corollary \ref{coro_locallyequal}), the analytic groups  $G'$ and  $Sl_m(\Z_p)$ are locally the same.

  \smallskip
  
  $\bullet$ Set $A=\sum_{i}(-1)^{i+1}E_{i,i}$.
  By conjugation, $A\cdot z_1=-z_1$ and $A\cdot z_2=z_2$, and then $A$ acts by $-1$ on $g_1$ and by $+1$ on $g_2$. Of course $A$ acts also on $Sl_m(\Z_p)$.
  
  Let $\rho_0: Gal(K/\Q) \rightarrow Gl_m(\Z_p)$ be the Galois representation defined by $\rho_0(s)=A$. Here $ker(\rho_0)=1$, and the map $\beta: M \rightarrow (G')^{p,el}$ defined by $\beta(h_1)=g_1(G')^p[G',G']$ and $\beta(h_2)=g_2(G')^p[G',G']$ is an isomorphism of $\Delta$-modules.
  
  \smallskip
  
  For $m=2$, consider Example \ref{exemple_sl2} and take $z_1=E_{1,1}(p)-E_{2,2}(p)$, $z_2=E_{1,2}(p)+E_{2,1}(p)$, $g_1=exp(x_1)$, $g_2=exp(x_2)$, and $A=E_{1,1}-E_{2,2}$.

  \smallskip
  
 In conclusion, principle of Section \ref{section_Galois_lifting} allows us to lift $\rho_0$ to a Galois representation of $Gal(K_p/\Q) \rightarrow Gl_m(\Z_p)$. 
  
  \begin{theo} Given $p>3$, and $m\geq 1$. 
  Let $K/\Q$ be an imaginary quadratic extension such that 
  $p\nmid |Cl_K|$. Then there exist continuous Galois representations $\rho: Gal(K_{p}/\Q)\rightarrow Gl_m(\Z_p)$ with  open image.
  \end{theo}
  
  \begin{proof} Here the field $K$ is $p$-rational and $E_K=1$; then apply Corollary \ref{coro_sln}. Hence there exists a continuous Galois representation $\rho' : Gal(K_p/\Q) \rightarrow  Sl_m^1\ltimes \rho_0(\Delta) \hookrightarrow Gl_m(\Z_p)$ with image containing $Sl_m^k$ for some $k\gg 0$, as open subgroup.

  Let  $\omega' : G_\Q \rightarrow \Z_p^\times$ be  the cyclotomic character.
  Now, recall that since $Sl_m(\Q_p)$ is semisimple, every open subgroup of $Sl_m^1$ has finite abelianization.
  Hence  the image of the Galois representation ${\rho}:=\rho' \otimes \omega' : Gal(K_p/\Q) \rightarrow Gl_m(\Z_p)$ has $p$-adic dimension $m^2$; in conclusion the image of $\rho$ is open in $Gl_m(\Z_p)$.
  \end{proof}

  As corollary, we obtain:
  
  \begin{coro} \label{coro_maintheorem}
   There exist continuous Galois representations $\rho: Gal(\overline{\Q}/\Q) \rightarrow Gl_m(\Z_p)$ with open image satisfying:
   \begin{itemize}
       \item[$(i)$] $\rho$ is unramified ouside $\{p, \infty\}$ if $p\equiv -1 \ {\rm mod} \ 4$,
       \item[$(ii)$] $\rho$ is unramified ouside $\{2,p, \infty\}$ if $p\equiv 1 \ {\rm mod} \ 4$.
   \end{itemize}
  \end{coro}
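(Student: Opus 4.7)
The plan is to reduce the corollary to the preceding theorem by exhibiting, for each prime $p\geq 3$ and each of the two cases, an imaginary quadratic field $K$ such that $(a)$ $K/\Q$ is unramified outside the prescribed set of primes, $(b)$ $p \nmid |Cl_K|$, and $(c)$ if $p=3$, the module $\U_3/\iota_{K,3}(E_K)$ is torsion free. Once such a $K$ is in hand, the preceding theorem supplies a continuous representation $\rho : Gal(K_p/\Q) \rightarrow Gl_m(\Z_p)$ with open image, and inflation along the surjection $G_\Q \twoheadrightarrow Gal(K_p/\Q)$ yields a representation of $G_\Q$ with the same image, whose ramification is contained in $\{p,\infty\}$ together with the (tame) primes ramifying in $K/\Q$. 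The case $m=1$ is handled separately by class field theory, as noted in the remark after Theorem~A.

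For case $(ii)$, with $p\equiv 1\pmod 4$, I would take $K=\Q(i)$. Its discriminant is $-4$, so $K/\Q$ is ramified only at $2$ and $K_p/\Q$ is consequently unramified outside $\{2,p,\infty\}$. Since $h_{\Q(i)}=1$, condition $(b)$ is automatic, and the preceding theorem applies directly for every prime $p\geq 5$ with $p\equiv 1 \pmod 4$.

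For case $(i)$, with $p\equiv 3\pmod 4$, the only imaginary quadratic field unramified outside $p$ is $K=\Q(\sqrt{-p})$ (of discriminant $-p$, because $-p\equiv 1\pmod 4$). The substantive point is condition $(b)$: one needs $p\nmid h(\Q(\sqrt{-p}))$. I would deduce this from the analytic class number formula
\[
h_K \;=\; \frac{w\sqrt{p}}{2\pi}\,L(1,\chi),
\]
where $w$ is the number of roots of unity in $K$ and $\chi$ is the quadratic character of conductor $p$, combined with the elementary Polya--Vinogradov-type bound $L(1,\chi)=O(\log p)$. Together these give $h_K \leq C\sqrt{p}\log p$, which is strictly smaller than $p$ for every $p\geq 3$, so $p\nmid h_K$. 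For the boundary case $p=3$, I would take $K=\Q(\sqrt{-3})$ and verify condition $(c)$ by a direct local computation: $E_K$ identifies with $\langle \zeta_3\rangle$, the unique prime $\p$ of $K$ above $3$ gives $\U_\p \simeq \langle \zeta_3\rangle \oplus \Z_3^2$, and the diagonal embedding $\iota_{K,3}$ matches the two torsion subgroups, so $\U_3/\iota_{K,3}(E_K)\simeq \Z_3^2$ is torsion free.

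The only nontrivial analytic input beyond the preceding theorem is the inequality $h(\Q(\sqrt{-p}))<p$ for $p\equiv 3\pmod 4$, and this is the step I expect to present with some care. Everything else---selecting the auxiliary field $K$, verifying the local condition at $p=3$, and inflating from $Gal(K_p/\Q)$ to $G_\Q$---is formal, and the corollary then drops out of the theorem.
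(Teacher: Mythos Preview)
Your argument is correct and follows the same line as the paper: pick an imaginary quadratic field $K$ with $p\nmid h_K$, apply the preceding theorem, and inflate. The paper simply takes $K=\Q(\sqrt{-p})$ in \emph{both} cases and appeals to an explicit Brauer--Siegel bound (Louboutin) to ensure $p\nmid h_K$; for $p=3$ it notes that $\Q(\sqrt{-3})$ is $3$-rational, exactly as you verify by hand.

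The one genuine difference is your choice $K=\Q(i)$ for $p\equiv 1\pmod 4$. This is a pleasant simplification: since $h_{\Q(i)}=1$, the class-number inequality is not needed at all in case~$(ii)$, whereas the paper's uniform choice $\Q(\sqrt{-p})$ forces the analytic input in both cases. For case~$(i)$ your derivation of $h(\Q(\sqrt{-p}))<p$ via the class number formula and a Polya--Vinogradov-type bound on $L(1,\chi)$ is precisely the content of the explicit Brauer--Siegel estimate the paper cites, so the two presentations are equivalent there.
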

  
  \begin{proof}
  Take $K=\Q(\sqrt{-p})$. 
  Thanks to an explicit version of Brauer-Siegel (see for example \cite{Louboutin}), $p\nmid |Cl_K|$.
  (For $p=3$, the number field $\Q(\sqrt{-3})$ is $3$-rational).
  \end{proof}

\subsection{Galois representations via $K=\Q(\zeta_p)$} \label{section_zeta_p}

The study of Galois representations through $\Q(\zeta_p)$  allows us to realize for large $m$, Galois representations $\rho:G_\Q \rightarrow Gl_m(\Z_p)$ unramified outside $\{p, \infty\}$, and with open image.

\medskip

Take $k=\Q$, $K=\Q(\zeta_p)$. 
Let $s$ be a generator of $\Delta=Gal(K/\Q)$. 
Remind that $\iota_{K,p}$ is injective, and  by Proposition \ref{prop_character_Gp},  $\chi(\ff_{K,p}/p)=\1+\omega+\omega^3+\cdots + \omega^{p-2}$,
where $\omega: G_\Q \rightarrow \fq_p^\times \subset \Z_p^\times$ is the mod $p$ reduction of the cyclotomic character.

\smallskip

 Take $m\geq 3$. Let   $g_1$ and $g_2$ be the elements of $Sl_m^1$ as in the previous section. Set $G'=\langle g_1,g_2\rangle \subset Sl_m^1$.

\smallskip

Given an odd integer $a$,  set $\displaystyle{A_a(s)=\sum_{i=1}^m \omega^{ia}(s)E_{i,i}}$. Consider 
 the  Galois representation ${\rho_0}: Gal(K/\Q) \rightarrow Gl_m(\Z_p)$ defined by ${\rho_0}(s)=A_a(s)$.

Then $A_a(s)\cdot z_1=\omega^{-a}(s) \ z_1$ and
$$A_a(s) \cdot z_2=\left\{\begin{array}{ll} \omega^{a(m-1)}(s) \ z_2 & m {\rm \ odd} \\ 
  \omega^{a(m-2)}(s)\ z_2  & m {\rm \ even}.
   \end{array}\right.$$
   Put $g_1=exp(z_1)$ and $g_2=exp(z_2)$.
The action of $A_a(s)$  is odd on $g_1$, and even on $g_2$.
Of course $A_a(s)$ acts also on $Sl_m^1$.

\smallskip
 
Thanks to  the decomposition of $\chi(\ff_{K,p}/p)$, we can find $h_1$ and $h_2$ in $\ff_{K,p}$ such that $s\cdot h_1= h_1^{\omega^{_a}(s)} $, and $s\cdot h_2=h_2^{\omega^{a(m-1)}(s)} $  if $a(m-1)=0 \ {\rm mod \ }p-1$ for $m$ odd, and $s\cdot h_2=h_2^{\omega^{a(m-2)}}$ if $a(m-2)=0 \ {\rm mod} \ p-1$ for $m$  even; there is no condition for the odd character, but the even character must be trivial.

Put $M=\fq_p h_1 + \fq_p h_2 \subset G_{K,p}^{p,el}$. Then $\Delta$ acts on $M$,  $ker(\rho_0)=ker(\omega^a)$ acts trivially on $M$, and the two $\Delta$-modules $M$ and $(G')^{p,el}$ are isomorphic.

Here, it is not difficult to see that the character $\chi(\gl_{m,p})$ of $\gl_{m,p}$ (via  $\rho_0$) contains only characters like $\omega^{(i-j)a}$ with $i,j \in \{1,\cdots, m\}$.

\smallskip

We can apply the previous techniques. And,
as before, the representation $\rho_0$ lifts when $\omega^{a a'}$ does not appear in $\chi(\T_{K,p}[p])=\chi^*(Cl_K[p])$, for every $a'\in \{\pm 1, \pm 2, \cdots, \pm m\}$ (in fact class modulo $p-1$ of).

\smallskip

Take now $a$ the odd part of $p-1$; in other words, $p-1=a 2^\lambda$ with $2\nmid a$; so $\lambda=v_2(p-1)$.
We obtain the first condition (regarding the existence of $h_1$ and $h_2$): for $m$ odd we must have  $v_2(m-1) \geq v_2(p-1)$; for $m$ even we must have $v_2(m-2) \geq v_2(p-1)$.
For a regular prime $p$, that is the only condition.

Regarding the condition so that $\T_{K,p}\perp \gl_{m,p}$: Let us start with a character $\omega^{k_i}$ that appears in $\chi(Cl_K[p])$, that is equivalent to say that $\omega^{1-k_i}$ appears in $\chi(\T_{K,p}[p]$); if  $\omega^{1-k_i}$ appears in $\chi(\gl_{m,p})$ then
$a $ divides  $k_i-1$.

\smallskip

Let us look at quickly the $p\equiv 3$ mod $4$ case; here   $a=(p-1)/2$. 

There is no condition on $m$, and the condition regarding $\T_{K,p}$  becomes $r_{\omega^{(p-1)/2}}(\T_{K,p})=0$. Observe that    $r_{\omega^{(p-1)/2}}(\T_{K,p})=r_\varphi(\T_{K_0,p})$, where $K_0=\Q(\sqrt{-p})$ and where $\varphi$ is the nontrivial character of $Gal(K_0/\Q)$. Hence  since $r_\varphi \T_{K_0,p}=0$ (see the proof of Corollary \ref{coro_maintheorem}), we get that  there is no obstruction for the embedding problem. In fact,  observe that in this case the representation we obtain through $\Q(\zeta_p)$ can be deduced by the one of Corollary \ref{coro_maintheorem}.

\smallskip

We have proved:

\begin{theo}  \label{theo_zeta_p} 
Let $p \equiv 1 \ {\rm mod \ 4 }$ be a prime number, and let $m\geq 3$.
Write $p-1=2^\lambda a$ where $2\nmid a$. 
Let $\{\omega^{k_1},\cdots, \omega^{k_e}\}$ be the characters corresponding to the nontrivial components of  the $p$-Sylow of the  class group of $\Q(\zeta_p)$.
Suppose that:

\begin{itemize}
 \item[$(i)$] $v_2(m-1) \geq v_2(p-1)$ if $m$ is odd, and  $v_2(m-2) \geq v_2(p-1)$ if $m$ is even;
  \item[$(ii)$] $a\nmid (k_i-1)$ for $i=1,\cdots, e$.
\end{itemize} Then there exist continuous Galois representations $\rho: G_\Q \rightarrow Gl_m(\Z_p)$   unramified outside $\{p,\infty\}$, and with open image.
\end{theo}

\begin{coro}
Let $p \equiv 1 \ ({\rm mod \ 4 })$ be a regular prime.  Then there exist continuous  Galois representations $\rho:G_\Q \rightarrow Gl_m(\Z_p)$ unramified outside $\{p, \infty\}$ and with open image, for every  odd $m$ such that $v_2(m-1)\geq v_2(p-1)$, and for every even $m$ such that $v_2(m-2)\geq v_2(p-1)$.
\end{coro}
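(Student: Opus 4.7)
The plan is to deduce this corollary as an immediate specialization of Theorem \ref{theo_zeta_p}, exploiting the fact that regularity of $p$ trivializes the class-group hypothesis. Concretely, recall that $p$ is \emph{regular} precisely when $p$ does not divide the class number of $\Q(\zeta_p)$, i.e.\ when the $p$-Sylow of the class group of $\Q(\zeta_p)$ is trivial. Hence in the notation of Theorem \ref{theo_zeta_p}, the set $\{\omega^{k_1},\ldots,\omega^{k_e}\}$ of characters indexing the nontrivial isotypic components is empty ($e=0$).

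With that observation in hand, I would simply check the two hypotheses of Theorem \ref{theo_zeta_p} in turn. Hypothesis $(ii)$, namely $a \nmid (k_i-1)$ for $i=1,\ldots,e$, is \emph{vacuously} satisfied since there are no indices to check. Hypothesis $(i)$ is just the 2-adic valuation condition on $m$, written with $\lambda=v_2(p-1)$: for $m$ odd we assumed $v_2(m-1)\geq v_2(p-1)=\lambda$, and for $m$ even we assumed $v_2(m-2)\geq v_2(p-1)=\lambda$. These are exactly the two cases in condition $(i)$ of Theorem \ref{theo_zeta_p}.

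Therefore Theorem \ref{theo_zeta_p} applies and yields a continuous Galois representation $\rho:G_\Q\rightarrow Gl_m(\Z_p)$ unramified outside $\{p,\infty\}$ and with open image, which is what we wanted. The one caveat is that Theorem \ref{theo_zeta_p} is stated for $m\geq 3$; the case $m=1$ follows from class field theory (as noted in the remark after Theorem \ref{TheoremA}), and there is no even $m\leq 2$ satisfying $v_2(m-2)\geq v_2(p-1)\geq 2$ nor any odd $m\leq 2$ satisfying $v_2(m-1)\geq 2$ other than the trivial cases, so no additional argument is required.

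There is essentially no technical obstacle here: the entire content of the corollary is packaged in recognizing that regularity kills the mirror obstruction coming from $\T_{K,p}[p]$ — exactly the same phenomenon that made the Greenberg construction work in the first place — and that the only residual constraint is the purely combinatorial $2$-adic compatibility between $m$ and $p-1$ ensuring the existence of the eigenvectors $h_1,h_2$ in the $\omega^a$- and $\omega^{a(m-1)}$- (resp.\ $\omega^{a(m-2)}$-) components of $\ff_{K,p}/p$.
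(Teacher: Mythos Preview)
Your proof is correct and is exactly the (implicit) argument the paper intends: the corollary is stated without proof immediately after Theorem~\ref{theo_zeta_p}, and the preceding discussion already notes that ``for a regular prime $p$, that is the only condition,'' i.e.\ condition~$(ii)$ is vacuous and only the $2$-adic condition~$(i)$ survives. One small remark: your handling of $m\le 2$ is slightly off, since $m=2$ \emph{does} satisfy $v_2(m-2)=v_2(0)=\infty\ge v_2(p-1)$ and is not covered by Theorem~\ref{theo_zeta_p}; it requires the separate $\sl_2$ generators of Example~\ref{exemple_sl2} (or one simply reads the corollary, like the theorem it follows, as a statement for $m\ge 3$).
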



\end{document}